\newcommand{\cal}{\mathcal}
\renewcommand{\epsilon}{\varepsilon}
\newcommand{\newsection}[1]
{\subsection{#1}\setcounter{theorem}{0} \setcounter{equation}{0}
\par\noindent}
\newtheorem{theorem}{Theorem}
\newtheorem{lemma}[theorem]{Lemma}
\newtheorem{corr}[theorem]{Corollary}
\newtheorem{proposition}[theorem]{Proposition}
\newtheorem{deff}[theorem]{Definition}
\newcommand{\bth}{\begin{theorem}}
\newcommand{\ble}{\begin{lemma}}
\newcommand{\bcor}{\begin{corr}}
\newcommand{\bdeff}{\begin{deff}}
\newcommand{\bprop}{\begin{proposition}}
\newcommand{\ele}{\end{lemma}}
\newcommand{\ecor}{\end{corr}}
\newcommand{\edeff}{\end{deff}}
\newcommand{\eprop}{\end{proposition}}
\newcommand{\cd}{\, \cdot\, }
\newcommand{\la}{\lambda}
\newcommand{\e}{\varepsilon}
\renewcommand{\Pi}{\varPi}
\renewcommand{\epsilon}{\varepsilon}
\newcommand{\R}{{\mathbb R}}
\newcommand{\1}{{\rm 1\hspace*{-0.4ex}%
\rule{0.1ex}{1.52ex}\hspace*{0.2ex}}}
\newcommand{\tb}{\widetilde \beta}
\begin{document}

\title[Quasimode and Strichartz estimates  for   Schr\"odinger equations]
{Quasimode and Strichartz estimates for  time-dependent Schr\"odinger equations
 with singular potentials}
%
%
%
%
%
%
\keywords{Schr\"odinger equation, eigenfunctions, quasimodes}
\subjclass[2010]{58J50, 35P15}

\thanks{The authors were supported in part by the NSF (NSF Grant DMS-1953413), and the second author was also partially supported by
   the Simons Foundation. }

\author[]{Xiaoqi Huang}
\address[X.H.]{Department of Mathematics,  Johns Hopkins University,
Baltimore, MD 21218}
\email{xhuang49@math.jhu.edu}

\author[]{Christopher D. Sogge}
\address[C.D.S.]{Department of Mathematics,  Johns Hopkins University,
Baltimore, MD 21218}
\email{sogge@jhu.edu}

\begin{abstract}
We generalize the Strichartz estimates for Schr\"odinger operators on compact manifolds of Burq, G\'erard and Tzvetkov~\cite{bgtmanifold} by allowing critically singular potentials $V$.  Specifically, we show that their $1/p$--loss  $L^p_tL^q_x(I\times M)$-Strichartz estimates hold for $e^{-itH_V}$ 
when $H_V=-\Delta_g+V(x)$ with $V\in L^{n/2}(M)$ if $n\ge3$ or $V\in L^{1+\delta}(M)$, $\delta>0$, if $n=2$, with $(p,q)$ being as in the
Keel-Tao theorem and $I\subset \R$ a bounded interval.  We do this by formulating and proving new ``quasimode'' estimates for scaled dyadic unperturbed Schr\"odinger operators and
taking advantage of the the fact that $1/q'-1/q=2/n$ for the endpoint Strichartz estimates when $(p,q)=(2,2n/(n-2)$.  We also show that the universal quasimode estimates that we obtain are saturated on {\em any} compact manifolds; however,  we suggest that they may lend themselves to improved
Strichartz estimates in certain geometries using recently developed ``Kakeya-Nikodym" techniques developed to obtain improved
eigenfunction estimates assuming, say, negative curvatures.
 \end{abstract}

\maketitle
\setcounter{secnumdepth}{3}


\newsection{Introduction and main results}

In \cite{bgtmanifold}, Burq, G\'erard and Tzvetkov showed that if $(M,g)$ is an $n\ge2$ dimensional compact manifold then the time-dependent Schr\"odinger operators associated with the
Laplace-Beltrami operator satisfy
\begin{equation}\label{i.1}
\|e^{-it\Delta_g}\|_{H^{1/p}(M)\to L^p_tL^q_x(I\times M)}
\le C_I,
\end{equation}
if $I\subset \R$ is a compact interval and
\begin{equation}\label{i.2}
n(1/2-1/q)=2/p \, \, \text{and } 2\le p\le \infty\, \,
\, \text{if } \, n\ge 3, \, \, \text{or } \, 
2<p\le \infty \, \, \text{if } \, n=2.
\end{equation}
Here $H^{\sigma}(M)$ denotes the $L^2$-Sobolev space associated with the Laplace-Beltrami operator on $M$
with norm
\begin{equation}\label{i.3}
\|u\|_{H^\sigma(M)}=\bigl\| \bigl(\sqrt{I-\Delta_g}\bigr)^\sigma u\bigr\|_{L^2(M)}.
\end{equation}
In the two-dimensional case, the bounds in \eqref{i.1} also depend on $(p,q)$.  In practice there one just
takes $I=[0,1]$ since this inequality implies the bound for all compact intervals.

The main purpose of this paper is to show that we
also have the bounds in \eqref{i.1} if $-\Delta_g$
is replaced by $-\Delta_g+V(x)$, with the potential
$V$ being real-valued and
satisfying
\begin{equation}\label{i.4}
V\in L^{n/2}(M) \, \, \text{when } \, 
n\ge3  \, \, \, \text{and } 
V\in L^{1+\delta}(M) \, \, \,
\text{some } \, \,  \delta>0 \, \, \text{if } \, \,
n=2.
\end{equation}
  Such a result involves critically singular
potentials, since multiplication by elements of $L^{n/2}$ scale the same as $\Delta_g$.  Indeed, if we consider the Euclidean
Laplacian, then $\Delta u(\la\, \cdot \, )=\la^2\bigl(\Delta u\bigr)(\la\, \cdot \, )$ and $\la^2 \|V(\la \, \cdot \, )\|_{L^{n/2}}=\|V\|_{L^{n/2}}$,
and similar formulae hold on $(M,g)$ if we scale the metric.

We should also point
out that the natural $L^1\to L^\infty$ estimates
for solutions of the heat equation
involving the operators
\begin{equation}\label{i.5}
H_V=-\Delta_g+V
\end{equation}
may break down when one merely assumes that
$V\in L^{n/2}(M)$.  Moreover, individual
eigenfunctions need not be bounded (unlike the case where $V$ is smooth).  See, e.g., \cite{AZ}, \cite{BSS} and
\cite{Simonsurvey}.  
On the other hand, if $V$ is as
above then $H_V$ defines a self-adjoint operator which is bounded from below
(see, e.g., \cite{BHSS}).   Among other things, this
allows us to define the time-dependent Schr\"odinger
operators $e^{-itH_V}$.

Even though heat equation bounds may break down for
$L^{n/2}$ potentials, we do have the analog of the
Strichartz estimates \eqref{i.1} of Burq, G\'erard
and Tzvetkov:

\begin{theorem}\label{hvthm}  Let the potential $V$
be real-valued and satisfy \eqref{i.4}.  Also,
assume that the pair of exponents $(p,q)$ is as
in \eqref{i.2}.  We then have for any compact
interval $I\subset \R$
\begin{equation}\label{i.6}
\bigl\|e^{-itH_V}u\bigr\|_{L^p_tL^q_x(I\times M)}
\lesssim \|u\|_{H^{1/p}(M)}.
\end{equation}
\end{theorem}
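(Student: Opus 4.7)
The plan is to reduce Theorem~\ref{hvthm} to a single-frequency Strichartz estimate for $e^{-itH_V}$ and then to run a Duhamel perturbation argument whose closure is engineered to exploit the critical scaling identity $1/q'-1/q=2/n$ available at the Keel-Tao endpoint $(p,q)=(2,2n/(n-2))$. Because $H_V$ is self-adjoint and bounded below, interpolating the desired bound against the trivial energy estimate $\|e^{-itH_V}g\|_{L^\infty_t L^2_x}\le\|g\|_{L^2}$ reduces matters to the Keel-Tao endpoint when $n\ge 3$, and to a pair arbitrarily close to it when $n=2$.

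At the endpoint, decompose $f=\sum_{\lambda}P_\lambda f$ with $P_\lambda=\varphi(\sqrt{-\Delta_g}/\lambda)$ a smooth dyadic spectral projector onto frequencies $\approx\lambda\ge 1$. A Littlewood-Paley square-function argument, valid since $p,q\ge 2$, reduces \eqref{i.6} to the dyadic bound
\[
\|e^{-itH_V}P_\lambda f\|_{L^2_tL^q_x(I\times M)}\lesssim \lambda^{1/2}\|P_\lambda f\|_{L^2(M)}
\]
uniformly in $\lambda$. To prove this, expand against the unperturbed flow via Duhamel,
\[
u(t):=e^{-itH_V}P_\lambda f = e^{it\Delta_g}P_\lambda f - i\int_0^t e^{i(t-s)\Delta_g}\bigl(V u(s)\bigr)\,ds.
\]
The first term is controlled by $\lambda^{1/2}\|P_\lambda f\|_{L^2}$ directly by the Burq-G\'erard-Tzvetkov estimate \eqref{i.1}.

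For the Duhamel term one would appeal to the new ``quasimode'' estimates announced in the abstract, which at scale $\lambda$ play the role of a loss-free dyadic inhomogeneous Strichartz estimate for the unperturbed operator: schematically,
\[
\Bigl\|\int_0^t e^{i(t-s)\Delta_g}F(s)\,ds\Bigr\|_{L^2_tL^q_x(I\times M)}\lesssim \|F\|_{L^2_tL^{q'}_x(I\times M)}
\]
for $F$ spectrally localized near $\lambda$. Taking $F=Vu$ and applying H\"older in the spatial variable, $\|V u\|_{L^{q'}_x}\le \|V\|_{L^{n/2}(M)}\|u\|_{L^q_x}$ (which is precisely the relation $1/q'-1/q=2/n$), gives the bootstrap inequality
\[
\|u\|_{L^2_tL^q_x}\le C\lambda^{1/2}\|P_\lambda f\|_{L^2}+C\|V\|_{L^{n/2}(M)}\|u\|_{L^2_tL^q_x}.
\]
To close this when $\|V\|_{L^{n/2}}$ is not small, split $V=V_1+V_2$ with $V_1\in L^\infty(M)$ and $\|V_2\|_{L^{n/2}}$ as small as desired; the $V_2$ term is absorbed on the left, while the $V_1$ contribution is handled by subdividing $I$ into finitely many intervals of length comparable to $\|V_1\|_{L^\infty}^{-1}$ and iterating. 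Summing in $\lambda$ via Littlewood-Paley yields \eqref{i.6}; the case $n=2$ proceeds identically with $q$ taken close to the endpoint and the hypothesis $V\in L^{1+\delta}$ used in the H\"older step.

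The main obstacle, and the central technical input of the paper, is establishing the loss-free dyadic inhomogeneous bound above. The naive dualization of \eqref{i.1} through $TT^*$ costs a factor of $\lambda^{2/p}=\lambda$ at the endpoint, which would be fatal to the bootstrap at high frequencies. Sidestepping this loss is exactly the purpose of the new quasimode estimates: they isolate the contribution at each dyadic spectral shell and, by exploiting the Keel-Tao scaling $1/q'-1/q=2/n$, produce an inhomogeneous bound uniform in $\lambda$.
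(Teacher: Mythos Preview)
Your outline captures the overall shape of the argument---Duhamel, endpoint reduction, the $V=V_1+V_2$ splitting, and the bootstrap via H\"older with $1/q'-1/q=2/n$---and these are indeed the ingredients in the paper. But there is a genuine gap at the step where you invoke the inhomogeneous estimate
\[
\Bigl\|\int_0^t e^{i(t-s)\Delta_g}F(s)\,ds\Bigr\|_{L^2_tL^q_x}\lesssim \|F\|_{L^2_tL^{q'}_x}
\quad\text{``for $F$ spectrally localized near $\lambda$''}
\]
with $F=Vu$. The forcing $Vu$ is \emph{not} spectrally localized near $\lambda$ in the $-\Delta_g$ sense: already $u(s)=e^{-isH_V}P_\lambda f$ loses the $P_\lambda$-localization because $H_V$ and $-\Delta_g$ do not commute when $V$ is singular, and multiplication by $V\in L^{n/2}$ then spreads frequencies further. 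If instead you intend the displayed estimate without any localization on $F$, it is false on a compact manifold (it would be a no-loss global inhomogeneous Strichartz estimate). So neither reading lets the bootstrap close. The paper flags exactly this incompatibility between $-\Delta_g$-based Littlewood--Paley and $H_V$ as the central obstruction.

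The paper's remedy is to replace spatial spectral cutoffs by \emph{time-frequency} cutoffs $\beta(-D_t/\lambda)$ and to run the Littlewood--Paley decomposition with respect to $\sqrt{H_V}$ rather than $\sqrt{-\Delta_g}$. Concretely: one proves the dyadic bound for data $f_\lambda$ with $H_V$-spectrum in $[9\lambda/10,11\lambda/10]$, sets $w(t)=\eta(t)e^{-it\lambda^{-1}H_V}f_\lambda$, and observes (Lemma~\ref{Verror}) that such $w$ is time-frequency localized at $\tau\approx-\lambda$ up to $O(\lambda^{-N})$ errors. The quasimode estimates \eqref{i.13}--\eqref{i.14} are formulated with the cutoff $\beta(-D_t/\lambda)$ precisely so that they apply to the Duhamel pieces $w_{\le\ell}$, $w_{>\ell}$ regardless of the spatial spectral content of $V_{\le\ell}w$ and $V_{>\ell}w$. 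The bounded piece $V_{\le\ell}$ is then handled not by subdividing the interval but by the $L^2$-quasimode bound \eqref{i.13}, which at the endpoint gives a factor $\lambda^{-1/2}$ and converts $\|V_{\le\ell}w\|_{L^2_{t,x}}\le\ell\|f_\lambda\|_{L^2}$ into an acceptable $\ell\lambda^{-1/2}\|f_\lambda\|_{L^2}$. Finally, the summation in $\lambda$ uses Littlewood--Paley for $H_V$ (Corollary~\ref{LPcorr}), together with the equivalence $\|(I+H_V)^{\sigma/2}f\|_{L^2}\approx\|(I-\Delta_g)^{\sigma/2}f\|_{L^2}$ for $0\le\sigma\le1$, to land on $H^{1/p}(M)$.
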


We should point out that,  Burq, G\'erard and Tzvetkov~\cite[Theorem 6]{bgtmanifold} discussed a variant of the above theorem for the Euclidean spaces with variable coefficient metrics, and their arguments can easily be adapted to the setting of compact manifolds which would show that the above results
hold when $V\in L^n(M)$.  Also, our main point of departure from the analysis in \cite{bgtmanifold} is to use dyadic cut-offs
in the time variable as opposed to the spatial variable.  We need to do this since Littlewood-Paley operators associated
with $-\Delta_g$ are not easily seen to be compatible with ones associated to $-\Delta_g+V$ if  $V$ is singular.  We also 
note  that the philosophy that, for solutions of dispersive equations,
dyadic time-frequency cut-offs and spatial ones should be interchangeable is not new.  For instance, for solutions of 
Schr\"odinger equations this is crucially used in \cite{IvD} and \cite{PTV} and for wave equations in \cite{MSSA}.

Just as Burq, G\'erard and Tzvetkov~\cite{bgtmanifold} did for the $V\equiv 0$ case, we shall prove this result
by showing that if one restricts to frequencies comparable to $\lambda$, with $\lambda$ large one has no-loss
estimates on small intervals of size $\la^{-1}$.   
Specifically,  if fix a real-valued Littlewood-Paley
bump function
\begin{equation}\label{i.7}\beta\in C^\infty_0((1/2,2)),
\end{equation}
for future convenience, satisfying 
\begin{equation}\label{i.8}1=\sum_{-\infty}^\infty \beta(2^{-j}s)\, \, \text{for } \, \, 
s>0, \quad \text{and } \, \beta(s)=1, \, \, s\in [3/4,5/4],
\end{equation}
then the main estimate in \cite{bgtmanifold} is that for large $\la$ we have
\begin{equation}\label{i.9}
\|\beta(P/\la)e^{it\Delta_g}\|_{L^2(M)\to L^p_tL^q_x([0,\la^{-1}]\times M)} =O(1), \quad P=\sqrt{-\Delta_g},
\end{equation}
if $(p,q)$ are as in \eqref{i.2}.
Since $e^{-it\Delta_g}$ is a unitary operator on $L^2(M)$, this of course says that one has $O(1)$ bounds
on all intervals of length $\la^{-1}$, and so by adding up $O(\la)$ of these bounds they obtained the estimate
\begin{equation}\label{i.9'}
\tag{1.9$'$}
\|\beta(P/\la)e^{it\Delta_g}\|_{L^2(M)\to L^p_tL^q_x([0,1]\times M)} =O(\la^{1/p}),
\end{equation}
which leads to \eqref{i.6} with $V=0$ using standard Littlewood-Paley estimates associated with $-\Delta_g$.

We shall follow this strategy and ultimately prove analogous dyadic estimates for $e^{-itH_V}$ that will allow us
to obtain \eqref{i.6}.  We shall have to show that the Littlewood-Paley estimates for $H_V$ are valid for the
exponents $q$ in \eqref{i.2}, which we shall obtain in an appendix using a general spectral multiplier
theorem of Blunck~\cite{Blunck} and recent estimates in our collaboration with Blair and Sire~\cite{BHSS}.

In order to obtain these natural dyadic variants of \eqref{i.6} we shall rely on certain microlocalized ``quasimode'' 
estimates for the unperturbed
scaled Schr\"odinger operators with a damping term,
\begin{equation}\label{i.10}
 i\lambda\partial_t +\Delta_g+i\la.
\end{equation}
Since there is no reason to expect that the Littlewood-Paley operators associated with $-\Delta_g$
are compatible with the corresponding ones for $H_V=-\Delta_g+V(x)$ with $V$ singular, it does not
seem that we would be able to use  quasimode estimates for the unperturbed operator
$-\Delta_g$  to prove results for $H_V$ if these estimates include ``spatial'' dyadic cutoffs $\beta(P/\la)$ as above.
We shall mitigate this potential issue by using
the Littlewood-Paley operators acting on the time
variable, 
$$\beta(-D_t/\la)h(x)=(2\pi)^{-1}
\int_{-\infty}^\infty e^{it\tau}
\beta(-\tau/\la) \, \Hat h(\tau)\, d\tau,
$$
with $\beta$ as above.

Let us be more specific. Our main estimates will concern solutions of the scaled inhomogeneous 
Schr\"odinger equation with damping term
\begin{equation}\label{i.11}
(i\la \partial_t+\Delta_g+i\la)w(t,x)=F(t,x), \quad
w(0,\cd)=0.
\end{equation}
It will be convenient to assume that the ``forcing term'' here satisfies
\begin{equation}\label{i.12}
F(t,x)=0, \quad t\notin [0,1].
\end{equation}

The result that we shall use to prove Theorem~\ref{hvthm} then is the following.

\begin{theorem}\label{mainthm}  Suppose that
$F$ satisfies the support assumption in \eqref{i.12}
and that $w$ solves \eqref{i.11}.  Then for $\la\ge 1$ and
exponents as in \eqref{i.2} we have
\begin{equation}\label{i.13}
\bigl\|\beta(-D_t/\la)w\bigr\|_{L^p_tL^q_x(\R\times M)}
\lesssim \la^{-1+1/p}\|F\|_{L^2_{t,x}([0,1]\times M)},
\end{equation}
and also
\begin{equation}\label{i.14}
\bigl\|\beta(-D_t/\la)w\bigr\|_{L^p_tL^q_x(\R\times M)}
\lesssim \la^{-1+2/p}
\|F\|_{L^{p'}_tL^{q'}_x([0,1]\times M)}.
\end{equation}
Furthermore, the quasimode estimates \eqref{i.13} are sharp on {\em any} manifold.
\end{theorem}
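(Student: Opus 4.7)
The strategy is Fourier analysis in the time variable. Since $F$ is supported in $[0,1]$ and the equation carries the damping $i\la$, Duhamel's formula
\[
w(t,x)=-i\la^{-1}\int_0^t e^{-(t-s)}e^{i(t-s)\Delta_g/\la}F(s,x)\,ds
\]
places $w$ in $L^2(\R\times M)$, and taking the Fourier transform in $t$ gives $\hat w(\tau,x)=R_\la(\tau)\hat F(\tau,x)$ with $R_\la(\tau):=(\Delta_g-\la\tau+i\la)^{-1}$. The cutoff $\beta(-D_t/\la)$ restricts $\tau$ to $-\tau\sim\la$. The heart of the argument is the resolvent estimate
\[
\|R_\la(\tau)\|_{L^2(M)\to L^{q_0}(M)}\lesssim \la^{-1/2},\qquad q_0=\tfrac{2n}{n-2},
\]
which follows from Sogge's spectral cluster theorem. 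Writing $f=\sum_\ell \1_{[\ell,\ell+1]}(P)f$ with $P=\sqrt{-\Delta_g}$, the scalar action of $R_\la(\tau)$ on the $\ell$-th band has size $\lesssim 1/(|\ell^2+\la\tau|+\la)$ while Sogge gives $\|\1_{[\ell,\ell+1]}(P)\|_{L^2\to L^{q_0}}\lesssim \ell^{1/2}$. The $L^{q_0}$ Littlewood--Paley inequality (valid for $q_0\in[2,\infty)$) then reduces the problem to maximizing $\ell^{1/2}/(|\ell^2+\la\tau|+\la)$, attained at $\ell\sim\la$ with value $\la^{-1/2}$. The analogous bound $\la^{\mu(q)-1}$ holds for other $(p,q)$ on the Strichartz line using Sogge's general exponent $\mu(q)$.

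To pass from the resolvent estimate to the Strichartz bounds, I would split $F=\tilde\beta(P/\la)F+(I-\tilde\beta(P/\la))F$, with $\tilde\beta$ a slightly wider cutoff than $\beta$. On the spatially frequency-localized piece, the short-interval estimate \eqref{i.9'} of Burq--G\'erard--Tzvetkov (rescaled from $[0,\la^{-1}]$ to $[0,1]$ in the $t$ variable of our equation) combined with Keel--Tao's abstract retarded Strichartz theorem applied to the damped semigroup $V_\la(r)=e^{-r}e^{ir\Delta_g/\la}$ yields \eqref{i.14}. The estimate \eqref{i.13} follows by Minkowski in time and Cauchy--Schwarz:
\[
\|w[\tilde\beta(P/\la)F]\|_{L^p_tL^q_x}\le\la^{-1}\!\int_0^1\!\!\|V_\la(\cdot-s)\tilde\beta(P/\la)F(s)\|_{L^p_tL^q_x}\,ds\lesssim \la^{-1+1/p}\|F\|_{L^1_tL^2_x}\lesssim \la^{-1+1/p}\|F\|_{L^2_{t,x}}.
\]
For the complementary piece, on bands $\ell$ with $|\ell-\la|\gtrsim\la$ one has $R_\la(\tau)$ norm $\lesssim 1/\la^2$ (much smaller than the on-shell $1/\la$), and the dyadic summation in $\ell$ against Sogge's bound and the $L^{q_0}$ Littlewood--Paley inequality shows that the contribution of $(I-\tilde\beta(P/\la))F$ to $\beta(-D_t/\la)w$ is of lower order in $\la$.

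For sharpness on any compact manifold, take $F(t,x)=\chi_{[0,1]}(t)e^{-it\la}f_\la(x)$, where $f_\la$ saturates Sogge's $L^{q_0}$ bound, for instance the normalized spectral kernel $f_\la=e(\,\cdot\,,x_0,\la)/\|e(\,\cdot\,,x_0,\la)\|_{L^2}$ at a fixed $x_0\in M$, which satisfies $\|f_\la\|_{L^{q_0}}/\|f_\la\|_{L^2}\sim\la^{1/2}$ by the pointwise local Weyl law on any $M$. Treating $f_\la$ as an approximate eigenfunction at $\la^2$, Duhamel yields $w(t,x)\approx -i\la^{-1}(1-e^{-t})e^{-it\la}f_\la(x)$ on $[0,1]$ with exponential decay afterwards; since $\beta(-D_t/\la)$ acts as an approximate identity on $e^{-it\la}f_\la$, the ratio $\|\beta(-D_t/\la)w\|_{L^2_tL^{q_0}_x}/\|F\|_{L^2_{t,x}}$ is $\sim \la^{-1}\|f_\la\|_{L^{q_0}}/\|f_\la\|_{L^2}\sim \la^{-1/2}$, saturating \eqref{i.13} at the endpoint. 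Analogous constructions with coherent-state wave packets adapted to the geodesic flow handle the remaining $(p,q)$ on the Strichartz line.

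The main technical obstacle will be the transition from Burq--G\'erard--Tzvetkov's spatial-frequency localization $\beta(P/\la)$ to the time-frequency localization $\beta(-D_t/\la)$ used here. While for solutions of the homogeneous scaled equation these are heuristically equivalent (an eigenfunction $P\phi_\la=\la\phi_\la$ oscillates at time frequency $-\la$), for a generic $L^2$ forcing $F$ the equivalence is only approximate and relies on quantitative control of the error $(I-\tilde\beta(P/\la))F$ through the sharp resolvent bound on $P$-bands away from the resonance. Carrying this control across the mixed-norm $L^p_tL^q_x$ Littlewood--Paley summation---especially at the endpoint $p=2$, where the naive $L^2$-Plancherel plus Minkowski argument yields only $L^{q_0}_xL^2_t$ rather than $L^2_tL^{q_0}_x$---is the key technical step.
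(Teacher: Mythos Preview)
Your decomposition into an on-resonance piece $\tb(P/\la)$ and an off-resonance remainder, with the on-resonance part handled by Keel--Tao applied to the damped semigroup, is exactly the paper's strategy (Proposition~\ref{mainprop}). For the off-resonance piece, however, the paper takes a simpler route than your Sogge-cluster plus Littlewood--Paley proposal: it uses only the crude Sobolev bound $\|\alpha(P)\|_{L^p\to L^q}\lesssim \sup_\mu(1+\mu)^{n(1/p-1/q)}|\alpha(\mu)|$ of Lemma~\ref{soblemma}. The point is that the ``frozen'' operator
\[
T_{t,s}=\int e^{i(t-s)\tau}\beta(-\tau/\la)\bigl(1-\tb^2(P/\la)\bigr)(-\la\tau-P^2+i\la)^{-1}\,d\tau
\]
obeys a pointwise-in-$(t,s)$ bound $\|T_{t,s}\|_{L^r\to L^q}\lesssim\la(1+\la|t-s|)^{-2}\la^{-2+n(1/r-1/q)}$, obtained by two integrations by parts in $\tau$; one then integrates in $(t,s)$ by Young's inequality (Lemma~\ref{freeze}). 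This sidesteps entirely the $L^{q_0}_xL^2_t$ versus $L^2_tL^{q_0}_x$ obstacle you correctly flag, since the time decay comes from the kernel rather than from Plancherel, and it avoids invoking the sharp spectral cluster bounds, which are unnecessary here.

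Your sharpness argument via the zonal spectral kernel has a genuine gap. The approximation $w\approx -i\la^{-1}(1-e^{-t})e^{-it\la}f_\la$ requires $f_\la$ to be an approximate eigenfunction of $\la^{-1}\Delta_g$ with error $o(1)$ over $t\in[0,1]$; but the spectral cluster $e(\cdot,x_0,\la)$ spans a unit band in $P$, hence a band of width $\sim 1$ in $P^2/\la$, and under $e^{it\Delta_g/\la}$ the phases $e^{-it\la_j^2/\la}$ dephase by $O(1)$. The $L^{q_0}$ mass of the zonal example comes from constructive interference at $x_0$ (all terms $|e_j(x_0)|^2>0$), which this dephasing destroys, so $\|w(t)\|_{L^{q_0}}$ is not comparable to $\la^{-1}\|f_\la\|_{L^{q_0}}$. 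On a general manifold one cannot find a single eigenfunction saturating the $L^{q_0}$ Sogge bound, so narrowing the band does not rescue the argument. The paper instead builds a Knapp coherent state $w=e^{i\la(x_1-t)}a(\la;t,x)$ with $a$ supported in a $\la^{-1/2}$-tube about a geodesic in $(t,x)$, and checks directly that $(i\la\partial_t+\Delta_g+i\la)w=O(\la)$ pointwise on a set of measure $\approx\la^{-n/2}$; this single example saturates \eqref{i.13} for \emph{all} admissible $(p,q)$ simultaneously. Your passing mention of ``coherent-state wave packets adapted to the geodesic flow'' for the non-endpoint exponents is in fact what is needed at the endpoint as well.
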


These ``microlocalized quasimode estimates'' 
are  natural analogs of the ones obtained
by one of us for Laplace-Beltrami operators in \cite{sogge88}.  
Additionally, the first estimate, \eqref{i.13}, essentially follows from
the results of one of us and Seeger~\cite{SeegerSogge89}.
As was the case in these earlier works, and more recently in 
  \cite{BSS} and \cite{BHSS}, it is natural to include the ``damping
term'',  $i\la$, to exploit the Fourier analysis that arises.  In the present
context, it allows use Fourier analysis 
in $\R$ to link dyadic microlocal cutoffs in the spatial variable involving $P=\sqrt{-\Delta_g}$ with the above
ones involving
the time variable.  As we shall see, being able to prove time-microlocalized estimates for solutions of
inhomogeneous equations involving the operators in \eqref{i.10} will allow us to use the Duhamel
formula to prove our estimates for $e^{-itH_V}$ in a manner that is somewhat reminiscent to arguments
in a recent joint work \cite{BHSS} on uniform Sobolev estimates for the operators $H_V$.  It is for this
reason, and others, that it is important for us to prove natural estimates for inhomogeneous equations, as
opposed to ones just involving the Cauchy problem.  On the other hand, our proof of 
\eqref{i.13} and \eqref{i.14} will be modeled by the arguments in \cite{bgtmanifold} that lead to \eqref{i.9}.
In \S~\ref{sharpsec} we shall show that the quasimode estimates \eqref{i.13} are optimal.

This paper is organized as follows.  In the next section, we shall prove Theorem~\ref{mainthm}.  Then, in
\S~\ref{stsec} we shall show how we can use the above Theorem along with Littlewood-Paley estimates
to obtain the Strichartz estimates in Theorem~\ref{hvthm}.  We shall prove the Littlewood-Paley estimates
that we require in an Appendix.  In \S~\ref{sharpsec} we shall show that the universal bounds
\eqref{i.13} that easily imply the estimates  \eqref{i.9'} are saturated on {\em any} manifold, even 
though Bourgain and Demeter~\cite{BourgainDemeterDecouple} showed much better estimates hold on the torus  when $p=q=2(n+2)/n$ with just an $\la^\epsilon$-loss for all $\e>0$,
 and Burq, G\'erard and Tzvetkov~\cite{bgtmanifold} also showed that on spheres there are improvements of \eqref{i.9'} 
 in many cases.  It seems a challenge to show that there are improvements in more general cases, such as when $M$
 has negative sectional curvature; however, the Knapp example that we shall construct in 
 \S~\ref{sharpsec}  suggests that perhaps the ``Kakeya-Nikodym" techniques that have been recently developed
 in \cite{BlairSoggeToponogov}, \cite{SBLog}, \cite{SoggeKaknik}, \cite{sogge2015improved} and
 \cite{SoggeZelditchL4} 
 to obtain improved eigenfunction estimates in certain geometries
 might lend themselves
 to this problem.
 
 We are grateful to the referee for several helpful suggestions which improved our exposition.


\newsection{Quasimode estimates for scaled  Schr\"odinger operators}\label{qmsect}

In this section we shall prove Theorem~\ref{mainthm}.
If $\beta$ is as in \eqref{i.7}--\eqref{i.8}, let us define
``wider cutoffs" that we shall also use as follows
\begin{equation}\label{m.2}
\tb(s)=\sum_{|j|<10}\beta(2^{-j}s)\in C^\infty_0(2^{-10},
2^{10}).
\end{equation}
For future use, note that
\begin{equation}\label{m.3}
\tb(s)=1 \quad \text{on } \, \, (1/4,4).
\end{equation}

One of the main estimates in \cite{bgtmanifold} is that one can
obtain the ``expected" $O(|t|^{-n/2})$ dispersive
estimates for $\beta(P/\la)e^{it\Delta_g}$, $P=\sqrt{-\Delta_g}$, on time intervals of the form
$[-\ell(\la),\ell(\la)]$ for $\la\gg 1$ if
$\ell(\la)=\delta \la^{-1}$ for some $\delta=\delta_M>0$.  Using the Weyl formula, they also showed that these 
$O(|t|^{-n/2})$ $L^1\to L^\infty$ bounds are optimal
in the sense that no such uniform bounds are possible
if $\la \ell(\la)\to \infty$ as $\la \to \infty$.
Using the bounds for each fixed Littlewood-Paley
bump function $\beta(2^{-j} \, \cdot \, )$, one
can of course obtain analogous $O(|t|^{-n/2})$ dispersive
estimates involving $\tb$ in \eqref{m.2} on intervals
$[-\delta \la^{-1},\delta \la^{-1}]$.  So, after 
possibly changing scales in time and correspondingly
scaling the Laplace-Beltrami operator, we may always
assume that we have the bounds
\begin{equation}\label{m.4}
\bigl\| \, \tb(P/\la)e^{it\Delta_g}\, \bigr\|_{L^1(M)
\to L^\infty(M)}\le C|t|^{-n/2}, \quad
|t|\le \la^{-1},
\end{equation}
by virtue of \cite[Lemma 2.5]{bgtmanifold}.  We also, trivially
for {\em all} times $t$ have the bounds
\begin{equation}\label{m.5}
\bigl\| \, \tb(P/\la)e^{it\Delta_g}\, \bigr\|_{L^2(M)
\to L^2(M)} \le C, \quad C=\|\tb\|_{L^\infty}.
\end{equation}

As was noted in \cite{bgtmanifold} one can use the Keel-Tao
theorem \cite[Theorem 1.2]{KT} to obtain the uniform
dyadic Strichartz estimates
\begin{multline}\label{m.6}
\bigl\|\tb(P/\la)e^{it\Delta_g} f
\bigr\|_{L^p_tL^q_x([0,\la^{-1}]\times M)}
\le C\|f\|_{L^2(M)},
\\
\text{if } \, 
\quad n(1/2-1/q)=2/p \, \, \text{and } 2\le p<\infty\, \,
\, \text{if } \, n\ge 3, \, \, \text{or } \, 
2<p<\infty \, \, \text{if } \, n=2.
\end{multline}
We have excluded the case of $p=\infty$ in \eqref{m.6} since then
$q=2$ and the estimate is trivial (with $[0,\la^{-1}]$ replaced
by any interval) by the spectral theorem.
Also, for future use, note that the endpoint case in
dimensions $n\ge3$ involves the exponents $p=2$ and
$q=2n/(n-2)$, for which we have $1/q'-1/q=2/n$, where,
as usual, $q'$ denotes the conjugate exponent.  
Being able to include this estimate will allow
us to handle potentials $V\in L^{n/2}$ when $n\ge3$,
while the fact that the endpoint estimate for $n=2$
breaks down, 
accounts for the reason that we
assume that our potentials
lie in $L^{1+\delta}$, some $\delta>0$ if $n=2$.

We shall use an equivalent variant of this estimate
and the related estimate for inhomogeneous equations
that will be formulated first for the unit interval
to simplify the Fourier analysis to follow.  We first,
trivially note that \eqref{m.6} is equivalent
to the estimate
\begin{equation}\label{m.7}
\bigl\|\tb(P/\la)e^{it \la^{-1}\Delta_g}e^{-t} f
\bigr\|_{L^p_tL^q_x([0,1]\times M)}
\le C \la^{1/p}\|f\|_{L^2(M)},
\, \, (p,q) \, \, \text{as in }\, \eqref{m.6},
\end{equation}
and since $e^{is\Delta_g}$ has $L^2(M)\to L^2(M)$
operator norm one, we also have the damped ``global''
estimate
\begin{equation}\label{m.8}
\bigl\|\tb(P/\la)e^{it\la^{-1}\Delta_g}e^{-t} f
\bigr\|_{L^p_tL^q_x([0,+\infty)\times M)}
\le C \la^{1/p} \|f\|_{L^2(M)},
\, \, (p,q) \, \, \text{as in }\, \eqref{m.6}.
\end{equation}
Note that for the scaled Schr\"odinger operator in \eqref{i.10} we
have
\begin{equation}\label{m.9}
\bigl(i\la \partial_t+\Delta_g+i\la)\bigl(e^{it\la^{-1}\Delta_g} e^{-t} h)(x)=0.
\end{equation}

To proceed, let $\1_+(s)=\1_{[0,+\infty)}(s)$ denote the Heaviside function and
\begin{equation}\label{m.10}
U(t)=\1_+(t) \tb(P/\la) e^{it\la^{-1}\Delta_g}e^{-t}
\end{equation}
be the operator in \eqref{m.8}.  For later use,
let us note that we can rewrite this operator.
Indeed,
if we recall that
\begin{equation*}
(2\pi)^{-1}\int_{-\infty}^\infty \frac{e^{it\tau}}{i\tau +1} \, d\tau = \1_+(t)e^{-t},
\end{equation*}
we deduce that  
\begin{equation}\label{m.11}
U(t)f(x)=\frac{i\la}{2\pi}
\int_{-\infty}^\infty 
\frac{e^{it\tau}}{-\la\tau + \Delta_g+i\la}
\, \tb(P/\la)f(x) \, d\tau.
\end{equation}
Also, if we regard $U$ as an operator sending functions
of $x$ into functions of $x,t$, then its adjoint is the operator
\begin{equation}\label{m.12}
U^*F(x)=\int_0^\infty e^{-s}\bigl(e^{-is\la^{-1}\Delta_g}
\tb(P/\la) F(s,\cd )\bigr)(x) \, ds.
\end{equation}
Consequently,
\begin{multline}\label{m.13}
\int U(t)U^*(s)F(s,x)\, ds
\\
=\1_+(t) \int_0^\infty
\Bigl(e^{i(t-s)\la^{-1}\Delta_g}e^{-(t-s)}
\tb^2(P/\la) e^{-2s}F(s,\cd)\Bigr)(x) \, ds.
\end{multline}
Note also that if, say,
\begin{equation}\label{m.14}
F(t,x)=0, \quad t\notin [0,1],
\end{equation}
then
 the solution to the scaled
inhomogeneous Schr\"odinger equation with damping term
\begin{equation}\label{m.15}
(i\la \partial_t+\Delta_g+i\la)w(t,x)=F(t,x), \quad
w(0,\cd)=0
\end{equation}
is given by
\begin{multline}\label{m.16}
w(t,x)=(i\la)^{-1}\int_0^t 
 \bigl(\Bigl(e^{i(t-s)\la^{-1}\Delta_g}e^{-(t-s)}
F(s,x)\bigr) \, ds
\\
=(2\pi)^{-1}\int_0^1\int_{-\infty}^\infty
\frac{e^{i(t-s)\tau}}{-\la\tau+\Delta_g+i\la}
F(s,\cd)(x)\, d\tau ds.
\end{multline}
Thus, since $w(t,\cd)=0$ for $t<0$, it follows
from \eqref{m.10}, \eqref{m.11} and \eqref{m.16}
that  
\begin{multline}\label{m.17}
\tb^2(P/\la)w(t,x) =(i\la)^{-1}\int_0^t
U(t)U^*(s) \bigl(e^{2s}\tb^2(P/\la)F(s,\cd)\bigr)(x) \, ds
\\
= (2\pi)^{-1}\int_0^1\int_{-\infty}^\infty
\frac{e^{i(t-s)\tau}}{-\la\tau+\Delta_g+i\la}
\tb^2(P/\la)F(s,\cd)(x)\, d\tau ds.
\end{multline}

Using these formulas, we claim that we can use the
arguments of Burq, G\'erard and Tzvetkov~\cite{bgtmanifold}
along with the Keel-Tao \cite{KT} theorem to
deduce the following.

\begin{proposition}\label{mainprop}  Suppose that
$F$ satisfies the support assumption in \eqref{m.14}
and that $w$ solves \eqref{m.15}.  Then for
exponents as in \eqref{m.6} and $\la\ge1$ we have
\begin{equation}\label{m.18}
\bigl\|\tb^2(P/\la)w\bigr\|_{L^p_tL^q_x(\R\times M)}
\lesssim \la^{-1+1/p}\|F\|_{L^2_{t,x}([0,1]\times M)},
\end{equation}
and also
\begin{equation}\label{m.19}
\bigl\|\tb^2(P/\la)w\bigr\|_{L^p_tL^q_x(\R\times M)}
\lesssim \la^{-1+2/p}
\|F\|_{L^{p'}_tL^{q'}_x([0,1]\times M)}.
\end{equation}
\end{proposition}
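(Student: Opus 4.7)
The proof rests on the homogeneous dyadic Strichartz bound \eqref{m.8}, which asserts that the damped spectrally localized propagator $U(t)=\1_+(t)\tb(P/\la)e^{it\la^{-1}\Delta_g}e^{-t}$ maps $L^2_x\to L^p_tL^q_x([0,\infty)\times M)$ with norm $\lesssim \la^{1/p}$ for every admissible pair $(p,q)$ as in \eqref{m.6}. This estimate already packages the Keel--Tao machinery together with the short-time dispersive bound \eqref{m.4} and the damping, and is the only deep input that I will need.

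For \eqref{m.18} the plan is to apply Minkowski's integral inequality directly to the retarded Duhamel formula. Writing $w(t)=(i\la)^{-1}\int_0^1 \1_{s\le t}\, e^{i(t-s)\la^{-1}\Delta_g}e^{-(t-s)}F(s)\,ds$, pulling $\tb^2(P/\la)$ inside and exchanging orders of integration, I reduce matters to bounding, for each fixed $s\in[0,1]$, the $L^p_tL^q_x$ norm of $\1_{t\ge s}\tb^2(P/\la)e^{i(t-s)\la^{-1}\Delta_g}e^{-(t-s)}F(s)$. Translation invariance in $t$ and \eqref{m.8} applied to $\tb^2$ (which is another bump of the same type and so satisfies the same dispersive bounds as $\tb$) control this slice by $C\la^{1/p}\|F(s)\|_{L^2_x}$; integrating in $s$ and invoking Cauchy--Schwarz on the unit interval yields the claimed $\la^{-1+1/p}\|F\|_{L^2_{t,x}}$ bound. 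Retardedness causes no difficulty here because the slice bound already lives on $[s,\infty)$.

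For \eqref{m.19} I would instead exploit the $TT^*$ identity \eqref{m.17}, which rewrites (a suitable power of $\tb(P/\la)$ applied to) $w$ as $(i\la)^{-1}\int_{0\le s\le t}U(t)U^*(s)\bigl(e^{2s}\tb^2(P/\la)F(s)\bigr)\,ds$. Applying the retarded inhomogeneous Strichartz estimate of Keel--Tao, namely
\[
\Bigl\|\int_{s\le t}U(t)U^*(s)G(s)\,ds\Bigr\|_{L^p_tL^q_x}\lesssim \la^{2/p}\|G\|_{L^{p'}_tL^{q'}_x},
\]
combined with the standard Littlewood--Paley boundedness of $\tb^2(P/\la)$ on $L^{q'}_x$ for $q'\in(1,2]$ on the compact manifold $M$, and the uniform bound on $e^{2s}$ over $[0,1]$, produces \eqref{m.19} after accounting for the prefactor $(i\la)^{-1}$.

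The principal obstacle is the endpoint pair $(p,q)=(2,2n/(n-2))$ in dimension $n\ge3$, where the Christ--Kiselev lemma is unavailable to convert the (easy) non-retarded $TT^*$ bound into the retarded one needed for \eqref{m.19}. Here I would invoke Keel--Tao's bilinear dyadic argument directly. Because \eqref{m.4} only supplies a dispersive estimate on unit time scales, this bilinear argument has to be implemented block-by-block and the contributions summed against the exponential damping factor $e^{-(t+s)}$ present in $UU^*$, in the same spirit as how \eqref{m.7} is summed to produce the global \eqref{m.8}. The rest of the derivation is essentially bookkeeping once these ingredients are in hand.
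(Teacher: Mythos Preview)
Your proposal is correct and follows essentially the same route as the paper: Minkowski plus the homogeneous bound \eqref{m.8} for \eqref{m.18}, and the retarded Keel--Tao inhomogeneous estimate applied to the $UU^*$ representation \eqref{m.17} for \eqref{m.19}, with the damping used to globalize. The paper's implementation is slightly cleaner in that it rescales to $V(t')=\1_{[0,\la^{-1}]}(t')U(\la t')$ so that Keel--Tao (including the endpoint) applies directly on $[0,\la^{-1}]$ without any block-by-block summation, and then handles $t\in[1,\infty)$ separately via the dual estimate \eqref{m.23} and the translated homogeneous bound \eqref{m.25}; also, the $\tb^2(P/\la)$ is already built into $U(t)U^*(s)$, so no separate $L^{q'}$ multiplier bound is needed.
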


We remark that, like \eqref{i.13}, the bounds in \eqref{m.18} are also optimal.

\begin{proof}
To use the dispersive estimates \eqref{m.4} of Burq, G\'erard and 
Tzvetkov, let
\begin{equation}\label{m.20}
V(t')f(x)= \1_{[0,\la^{-1}]}(t')U(\la t')f(x)= \1_{[0,\la^{-1}]}(t') e^{-\la t'} \tb(P/\la) e^{it'\Delta_g}f(x).
\end{equation}
We then clearly have
$$\|V(t')\|_{L^2(M)\to L^2(M)}=0(1),$$
and \eqref{m.4} says that
$$\|V(t')(V(s'))^*\|_{L^1(M)\to L^\infty(M)}
\lesssim |t'-s'|^{-n/2}.$$

We can use the Keel-Tao theorem along with these
two inequalities to deduce 
that
$$\|V(t')f\|_{L^p_{t'}L^q_x([0,\la^{-1}]\times M)}
\lesssim \|f\|_{L^2(M)},
$$
as well
as 
$$\Bigl\|\int_0^{t'}V(t')V^*(s')G(s',\cd)\, ds'
\, \Bigr\|_{L^p_{t'}L^q_x([0,\la^{-1}]\times M)}
\lesssim \|G\|_{L^{p'}_{t}L^{q'}_x([0,\la^{-1}]\times
M)},
$$
and
$$\Bigl\| \int_0^{\la^{-1}}V^*(s')G(s', \cd)\, ds'
\Bigr\|_{L^2(M)}
\lesssim  \|G\|_{L^{p'}_tL^{q'}_x([0,\la^{-1}]\times
M)}.
$$
Using \eqref{m.20} we deduce that these inequalities
are equivalent to
\begin{equation}\label{m.21}
\|U(t)f\|_{L^p_tL^q_x([0,1]\times M)}
\lesssim \la^{1/p}\|f\|_{L^2(M)},
\end{equation}
as well as
\begin{multline}\label{m.22}
\Bigl\|\int_0^t U(t)U^*(s)H(s,\cd)\, ds
\Bigr\|_{L^p_tL^q_x([0,1]\times M)}
\\
\lesssim \la^{2/p} \,
\|H\|_{L^{p'}_tL^{q'}_x([0,1]\times
M)}, \quad \text{if } \, \, H(s,\cd)=0, \, \, s\notin
[0,1],
\end{multline}
and
\begin{equation}\label{m.23}
\Bigl\|\int_0^1 U^*(s)H(s,\cd)\, ds\Bigr\|_{L^2(M)}
\lesssim \la^{1/p}\|H\|_{L^{p'}_tL^{q'}_x([0,1]\times
M)},
\end{equation}
respectively.

Using \eqref{m.22} with $H=e^{2s}F$ along with \eqref{m.17} we obtain the analog of \eqref{m.19}
where the norms are taken over $[0,1]\times M$
since $\|H\|_{L^p_tL^q_x}\approx \|F\|_{L^p_tL^q_x}$
due to \eqref{m.14}.  Since, as we noted before
$w$ and hence $\tb^2(P/\la)w$ vanishes for $t<0$, to
prove the remaining part of \eqref{m.22} we need
that we also have
\begin{equation}\label{m.24}
\bigl\|\tb^2(P/\la)w\bigr\|_{L^p_tL^q_x([1,\infty)\times M)}
\lesssim \la^{-1+2/p}
\|F\|_{L^{p'}_tL^{q'}_x([0,1]\times M)}.
\end{equation}
Since for $t>1$
$$\int_0^t U(t)U^*(s)H(s,\cd)\, ds
=U(t)\Bigl(\int_0^1 U^*(s)H(s,\cd)\, ds\Bigr),
\quad H=e^{2s}F,
$$
it is simple to check that by \eqref{m.23} we would
have this inequality if 
\begin{equation}\label{m.25}\|U(t)f\|_{L^p_tL^q_x([1,\infty)\times M)}
\lesssim \la^{1/p}\|f\|_{L^2(M)}.
\end{equation}
However, since for $j=1,2,\dots$ and $s\in [0,1]$
$$U(j+s)= e^{-j}e^{i\la^{-1}j\Delta_g}U(s),$$
and $\| e^{it\la^{-1}\Delta_g}\|_{L^2\to L^2}=1$, we deduce that 
$$\bigl\|\tb^2(P/\la)w\bigr\|_{L^p_tL^q_x([j,j+1]\times M)}
\lesssim e^{-j}\la^{-1+2/p}
\|F\|_{L^{p'}_tL^{q'}_x([0,1]\times M)}, \, \,
j=1,2,3,\dots,
$$
which of course yields \eqref{m.24}, and, as a result,
\eqref{m.18}.

Since $\|U^*(s)\|_{L^2(M)\to L^2(M)}=O(1)$,
using \eqref{m.17} along with \eqref{m.21}
and
\eqref{m.25} we find that if $H=e^{2s}F$
\begin{align*}
\bigl\|\tb^2(P/\la)w\bigr\|_{L^p_tL^q_x(\R\times M)}
&\le \la^{-1} \int_0^1
\bigl\|\1_+(t-s) U(t)U^*(s)H(s, \cd)
\bigr\|_{L^p_tL^q_x(\R\times M)}\, ds
\\
&\lesssim
\la^{-1+1/p}\int_0^1\|U^*(s)H(s, \cd)\|_{L^2_x}\, ds
\\
&\lesssim \la^{-1+1/p}\int_0^1\|F(s,\cd)\|_{L^2_x}\, ds
\le \la^{-1+1/p}\|F\|_{L^2_{t,x}([0,1]\times M)},
\end{align*}
as desired, which completes the proof.\end{proof}

By an argument we shall give in the next section
the quasimode estimates \eqref{m.18} for the
scaled Schr\"odinger operators in \eqref{i.10} imply
the dyadic Strichartz estimates \eqref{m.4}
of Burq, G\'erard and Tzvetkov~\cite{bgtmanifold}.  Unfortunately, though, as we noted before, we do not seem to be able to
directly use Proposition~\ref{mainprop} to obtain
analogous estimates for $-H_V=\Delta_g-V$ with
$V\in L^{n/2}(M)$, $n\ge 3$ or the 2-dimensional ones in
Theorem~\ref{hvthm}, since Littlewood-Paley
operators associated with $H_V$ are not easily seen
to be compatible with the corresponding ones 
involving $-\Delta_g$ if $V$ is allowed to be singular.

It is for this reason that we need the bounds in Theorem~\ref{mainthm} involving the
Littlewood-Paley cutoff $\beta(-D_t/\la)$ in the time-variable.
We are now in a position to prove this result.  We shall use Proposition~\ref{mainprop}
and the following two elementary lemmas whose proofs we postpone for the moment.



\begin{lemma}\label{soblemma}  Let $\alpha\in C([0,\infty))$ and $1<p\le 2<q<\infty$.  Then
\begin{equation}\label{m.28}
\|\alpha(P)f\|_{L^q(M)}
\le C_{p,q} \, 
\bigl(\sup_{\mu\ge 0} (1+\mu)^{n(\frac1p-\frac1q)}
|\alpha(\mu)|\bigr) \, \|f\|_{L^p(M)}.
\end{equation}
\end{lemma}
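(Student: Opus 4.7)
The plan is to reduce the lemma to the spectral theorem on $L^2$ together with two applications of the Sobolev embedding on $M$, using the weight $(1+\mu)^{n(1/p-1/q)}$ to convert the hypothesis on $\alpha$ into a uniform pointwise bound after extracting two Bessel-type factors.

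Specifically, set $s=n(1/p-1/q)\ge0$, $A=\sup_{\mu\ge0}(1+\mu)^s|\alpha(\mu)|$, and $m(\mu)=(1+\mu)^s\alpha(\mu)$, so that $\|m\|_\infty=A$ and $\alpha(\mu)=(1+\mu)^{-s}m(\mu)$. Split the exponent as $s=\sigma_1+\sigma_2$ with
\[
\sigma_1=n\bigl(\tfrac12-\tfrac1q\bigr),\qquad \sigma_2=n\bigl(\tfrac1p-\tfrac12\bigr),
\]
both of which are non-negative since $p\le2\le q$. Because $P$ is self-adjoint with spectrum in $[0,\infty)$, functional calculus yields the factorization
\[
\alpha(P)=(I+P)^{-\sigma_1}\,m(P)\,(I+P)^{-\sigma_2}.
\]

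I would then bound the three factors separately. The middle factor satisfies $\|m(P)\|_{L^2\to L^2}\le\|m\|_\infty=A$ by the spectral theorem. For the outer factor I would use the Sobolev embedding $H^{\sigma_1}(M)\hookrightarrow L^q(M)$, which is valid at the critical exponent $\sigma_1=n(1/2-1/q)$ precisely because $q<\infty$; this gives $\|(I+P)^{-\sigma_1}\|_{L^2\to L^q}\lesssim1$, after noting that $(1+\mu)^{-\sigma_1}$ and $(1+\mu^2)^{-\sigma_1/2}$ are pointwise comparable on $[0,\infty)$, so $(I+P)^{-\sigma_1}$ and $(\sqrt{I-\Delta_g}\,)^{-\sigma_1}$ are equivalent spectral multipliers. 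The remaining factor is handled by duality: since $(I+P)^{-\sigma_2}$ is self-adjoint on $L^2$,
\[
\|(I+P)^{-\sigma_2}\|_{L^p\to L^2}=\|(I+P)^{-\sigma_2}\|_{L^2\to L^{p'}}\lesssim 1,
\]
again by the critical Sobolev embedding $H^{\sigma_2}(M)\hookrightarrow L^{p'}(M)$, this time requiring $p'<\infty$, i.e., $p>1$.

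Composing the three operator bounds in the factorization gives $\|\alpha(P)f\|_{L^q(M)}\le C_{p,q}\,A\,\|f\|_{L^p(M)}$, which is the desired estimate. There is no genuine obstacle here: the hypotheses $1<p$ and $q<\infty$ are exactly what is needed to stand on the good side of the Sobolev embedding at the critical regularity, and the assumption $p\le 2\le q$ ensures that both smoothing exponents $\sigma_1,\sigma_2$ are non-negative so that the factorization through $L^2$ makes sense.
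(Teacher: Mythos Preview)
Your proof is correct and follows essentially the same route as the paper: factor $\alpha(P)$ through $L^2$ via the two Bessel potentials $(I+P)^{-\sigma_1}$ and $(I+P)^{-\sigma_2}$, apply the Sobolev embeddings $L^2\to L^q$ and $L^p\to L^2$ at the critical exponents, and use the spectral theorem for the middle $L^2\to L^2$ piece. The paper writes this as a chain of inequalities rather than an explicit operator factorization, but the content is identical.
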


\begin{lemma}\label{freeze}  Suppose that
\begin{equation}\label{m.29}
|K_\la(t,t')|\le \la (1+\la|t-t'|)^{-2}.
\end{equation}
Then if $1\le p\le q\le \infty$ we have
the following uniform bounds for $\la\ge1$
\begin{equation}\label{m.30}
\Bigl\|\, \int K_\la(t,t') \, 
G(t',\cd)\, dt' \, 
\Bigr\|_{L^p_tL^q_x(\R\times M)}
\le C\|G\|_{L^p_tL^q_x(\R\times M)}.
\end{equation}
Also, suppose that 
$$WF(t,x)=
\int_{-\infty}^\infty \int_M
K(t,x;t',y) \, F(t',y) \, d\mathrm{Vol}(y) \, ds'
$$
and that for each $t,t'\in \R$ the operator
$$W_{t,t'}f(x) = \int_M K(x,t;y,t') f(y) \, d\mathrm{Vol}(y)$$
satisfies
$$\|W_{t,t'}f\|_{L^q(M)}
\le \la(1+\la|t-t'|)^{-2} \, \|f\|_{L^r(M)}
$$
for some $1\le r\le q\le \infty$.  Then if
$1\le s\le p\le \infty$ we have for $\la\ge1$
\begin{equation}\label{m.31}
\|WF\|_{L^p_tL^q_x(\R\times M)}
\le C\la^{\frac1s-\frac1p}
\|F\|_{L^s_tL^r_x(\R\times M)}.
\end{equation}
\end{lemma}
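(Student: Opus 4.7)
The plan is to reduce both parts of Lemma~\ref{freeze} to elementary scalar convolution inequalities in the time variable by a Minkowski-type argument. In each case the hypothesis controls the quantity of interest by the same kernel $K_\la(t):=\la(1+\la|t|)^{-2}$, which under the scaling $s=\la t$ is a bounded, rapidly decaying profile; its $L^a$-norm therefore satisfies $\|K_\la\|_{L^a(\R)}=c_a\, \la^{1-1/a}$ for every $1\le a\le \infty$.

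For \eqref{m.30}, I would first apply Minkowski's integral inequality to pull the $L^q_x$-norm inside the $dt'$-integration, which is permissible since the kernel acts only in the time variable. This reduces the desired bound to the $L^p_t(\R)\to L^p_t(\R)$ boundedness of the scalar integral operator with kernel $|K_\la(t,t')|$ applied to $h(t'):=\|G(t',\cd)\|_{L^q_x}$. The pointwise bound \eqref{m.29} gives, by a direct change of variable,
\[
\sup_{t}\int|K_\la(t,t')|\,dt'+\sup_{t'}\int |K_\la(t,t')|\,dt\le 2\int(1+|u|)^{-2}\,du<\infty,
\]
so Schur's test produces $L^p_t$-boundedness for every $1\le p\le\infty$ with a constant independent of $\la$.

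For \eqref{m.31}, the same Minkowski maneuver together with the assumed mapping property $\|W_{t,t'}f\|_{L^q_x}\le \la(1+\la|t-t'|)^{-2}\|f\|_{L^r_x}$ shows that $\|WF(t,\cd)\|_{L^q_x}$ is dominated by the convolution $K_\la*h$, where this time $h(t'):=\|F(t',\cd)\|_{L^r_x}$. I would then invoke Young's inequality with the exponent $a$ defined by $1+1/p=1/a+1/s$, which gives
\[
\|K_\la*h\|_{L^p_t}\le \|K_\la\|_{L^a}\,\|h\|_{L^s_t}=c_a\, \la^{1/s-1/p}\,\|h\|_{L^s_t},
\]
exactly the right-hand side of \eqref{m.31}.

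No genuine obstacle is anticipated; the only bookkeeping step is to check that the hypothesis $1\le s\le p\le \infty$ is precisely what forces $1\le a\le\infty$, so that Young's inequality is applicable. In essence, both parts of the lemma are instances of the same "Minkowski, then scalar convolution" template, with Schur's test covering the scale-invariant case $s=p$ and Young's inequality supplying the $\la^{1/s-1/p}$ gain when $s<p$.
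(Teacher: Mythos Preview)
Your proposal is correct and follows essentially the same route as the paper: both arguments apply Minkowski's inequality in the $x$-variable to reduce to a scalar convolution in $t$, and then invoke Young's inequality to obtain the $\la^{1/s-1/p}$ factor in \eqref{m.31}. The only cosmetic difference is that for \eqref{m.30} you appeal to Schur's test while the paper uses Young's inequality (the $a=1$ case), which in this convolution setting are equivalent.
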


\begin{proof}[Proof of Theorem~\ref{mainthm}]
We first note that the kernel of $\beta(-D_t/\la)$
is $O(\la (1+\la|t-t'|)^{-2})$.  Therefore, by
\eqref{m.30}
$$\|\beta(-D_t/\la) \tb^2(P/\la)w\|_{L^p_tL^q_x(\R\times M)}
\lesssim \| \tb^2(P/\la)w\|_{L^p_tL^q_x(\R\times M)}.
$$
Therefore, if as in Proposition~\ref{mainprop} and
our theorem our forcing term $F$ satisfies \eqref{m.14},
it suffices to show that $\beta(-D_t/\la)(I-\tb^2(P/\la))w$ enjoys the bounds in 
\eqref{i.13} and \eqref{i.14}.

Recalling \eqref{m.16}, this means that it suffices
to show that
\begin{multline}\label{m.32}
\Bigl\|
\int_0^1\int_{-\infty}^\infty
\frac{e^{i(t-s)\tau}}{-\la\tau-P^2+i\la}
\beta(-\tau/\la) \, 
\bigl(1-\tb^2(P/\la)\bigr)\, F(s,\cd)\bigr)\, d\tau ds
\, \Bigr\|_{L^p_tL^q_x(\R\times M)}
\\
\lesssim \la^{-1+1/p}\|F\|_{L^2_{t,x}([0,1]\times M)},
\end{multline}
as well as
\begin{multline}\label{m.33}
\Bigl\|
\int_0^1\int_{-\infty}^\infty
\frac{e^{i(t-s)\tau}}{-\la\tau-P^2+i\la}
\beta(-\tau/\la) \, 
\bigl(1-\tb^2(P/\la)\bigr)\, F(s,\cd)\bigr)\, d\tau ds
\, \Bigr\|_{L^p_tL^q_x(\R\times M)}
\\
\lesssim \la^{-1+2/p}
\|F\|_{L^{p'}_tL^{q'}_x([0,1]\times M)}.
\end{multline}

To use Lemma~\ref{freeze} set
$$\alpha(t,s;\mu)
= \int_{-\infty}^\infty
\frac{e^{i(t-s)\tau}}{-\la\tau-\mu^2+i\la}
\beta(-\tau/\la) \, 
\bigl(1-\tb^2(\mu/\la)\bigr) \, d\tau,$$
and note that, by \eqref{m.3} and the support
properties of $\beta$ we have
for $j=0,1,2$
$$
\la \,
\bigl|
\la^{j} \,
\partial_\tau^j \bigl((1-\tb^2(\mu/\la)\bigr)  \beta(-\tau/\la)
(-\la\tau-\mu^2+i\la)^{-1}\bigr)\, \bigr|
\lesssim \la (\mu^2+\la^2)^{-1},$$
which, by a simple  integration parts argument, translates to the bound
$$|\alpha(t,s;\mu)|\lesssim 
\la(1+\la|t-s|)^{-2} \cdot (\mu^2+\la^2)^{-1}.$$

If we use Lemma~\ref{soblemma} we deduce from this that the 
``frozen operators" 
$$T_{t,s}h(x)
=\int_{-\infty}^\infty
\frac{e^{i(t-s)\tau}}{-\la\tau-P^2+i\la}
\beta(-\tau/\la) \, 
\bigl(1-\tb^2(P/\la)\bigr)h(x)\, d\tau,$$
satisfy
\begin{multline}\label{m.34}
\|T_{t,s}h\|_{L^q(M)}
\lesssim \la(1+\la|t-s|)^{-2}
\cdot \la^{-2+n(1/2-1/q)}\|h\|_{L^{2}(M)}
\\
= \la(1+\la|t-s|)^{-2} \cdot \la^{-2+2/p}
\|h\|_{L^{2}(M)},
\end{multline}
as well as
\begin{multline}\label{m.35}
\|T_{t,s}h\|_{L^q(M)}
\lesssim \la(1+\la|t-s|)^{-2}
\cdot \la^{-2+n(1/q'-1/q)}\|h\|_{L^{q'}(M)}
\\
= \la(1+\la|t-s|)^{-2}
\cdot \la^{-2+4/p}\|h\|_{L^{q'}(M)},
\end{multline}
due to the fact that our assumption on the exponents
in \eqref{m.6} means that $n(1/2-1/q)=2/p$
 and $n(1/q'-1/q)=2n(1/2-1/q)=4/p$.

If we combine \eqref{m.34} and \eqref{m.31}, we conclude
that the left side of \eqref{m.32} is dominated by
$$\la^{\frac1{2}-\frac1p}
\cdot \la^{-2+2/p}\|F\|_{L^2_{t,x}(\R\times M)}
=\la^{-\frac32+\frac1p} \|F\|_{L^2_{t,x}(\R\times M)}
,
$$
which is better than the bounds posited in \eqref{m.32}
by a factor of $\la^{-1/2}$.

Similarly, if we combine \eqref{m.35} and \eqref{m.31}, we find that the left side of \eqref{m.33} is dominated
by
$$\la^{\frac1{p'}-\frac1p}\la^{-2+4/p}\|F\|_{L^{p'}_t
L^{q'}_x(\R\times M)}
=\la^{-1+2/p} \|F\|_{L^{p'}_t
L^{q'}_x(\R\times M)},
$$
as desired, which completes
the proof.
\end{proof}

To conclude this section,
for the sake of completeness let us now prove the lemmas, both of which are well known.  The first is a slight
generalization of Lemma 2.3 in \cite{BSSY}, for instance,
while the second lemma is essentially Theorem 0.3.6
in \cite{SFIO2}.

\begin{proof}[Proof of Lemma~\ref{soblemma}]
Since $(1+P)^{-n(\frac12-\frac1q)}: \, L^2(M)\to L^q(M)$
and $(1+P)^{-n(\frac1p-\frac12)}: \, L^p(M)\to
L^2(M)$, by orthogonality, we obtain
\begin{align*}
\|\alpha(P)f\|_{L^q(M)}
&\lesssim \bigl\| \, (1+P)^{n(\frac12-\frac1p)}
\alpha(P)f\, \bigr\|_{L^2(M)}
\\
&\le \bigl(\sup_{\mu\ge 0} (1+\mu)^{n(\frac1p-\frac1q)}
|\alpha(\mu)|\bigr) \cdot \|(1+P)^{-n(\frac1p-\frac12)}
f\|_{L^2(M)}
\\
&\lesssim \bigl(\sup_{\mu\ge 0} (1+\mu)^{n(\frac1p-\frac1q)}
|\alpha(\mu)|\bigr) \cdot \|f\|_{L^p(M)},
\end{align*}
as desired.
\end{proof}

\begin{proof}[Proof of Lemma~\ref{freeze}]
To obtain \eqref{m.30} we note that
by Minkowski's inequality and \eqref{m.29}
$$ \Bigl\| \int K_\la(t,t') \, 
G(t', \cd) \, dt' \,
\Bigr\|_{L^q_x(M)}
\le 
\int \la (1+\la|t-t'|)^{-2} \,
\|G(t',\cd)\|_{L^q_x(M)} \, dt'.
$$
Taking the $L^p_t$-norm of both sides
and using Young's inequality yields
\begin{multline*}
\Bigl\| \int K_\la(t,t') \, G(t', \cd) \, dt'
\Bigl\|_{L^p_tL^q_x(\R\times M)}
\le \Bigl(\int \,
\Bigl|\, 
\int \la (1+\la|t-t'|)^{-2} \,
\|G(t',\cd)\|_{L^q_x(M)} \, dt' \, \Bigr|^p \, dt
\Bigr)^{1/p}
\\
\le C
\Bigl(\, \int \, \|G(t,\cd)\|_{L^q_x(M)}^p \, dt
\, \Bigr)^{1/p}=\|G\|_{L^p_tL^q_x(\R\times M)},
\end{multline*}
as desired.

One also obtains \eqref{m.31} from this argument
after noting that, by Young's inequality,
convolution with $\la(1+\la|t|)^{-2}$ has 
$L^s(\R)\to L^p(\R)$ operator norm which
is $O(\la^{\frac1s-\frac1p})$.
\end{proof}

\newsection{
Strichartz estimates on compact manifolds}\label{stsec}

Let us now see how we can use the first estimate in Theorem~\ref{mainthm} to prove the dyadic
Strichartz estimate \eqref{m.6} of Burq, G\'erard and Tzvetkov~\cite{bgtmanifold}.  This simple argument
will serve as a model for the one we shall use to prove the same sort of bounds where we replace
$-\Delta_g$ with $H_V=-\Delta_g+V$, with $V$ singular.

Let us first recall that the spectrum of $\sqrt{-\Delta_g}$ is nonnegative and discrete.  If we account
for multiplicity, we can arrange the eigenvalues,
$0=\lambda_0<\lambda_1\le \lambda_2\le \dots$ and the
associated $L^2$-normalized eigenfunctions
$$-\Delta_g e_j=\lambda_j^2, \quad
\int_M |e_j|^2\,dV_g=1$$
form an orthonormal basis for $L^2(M)$.  If then
$$E_jf(x)=\bigl(\, \int_M f \, \, \overline{e_j}\,dV_g
\, \bigr) \, e_j(x)$$
denotes the projection onto the $j$-th eigenspace we have
$$e^{it\Delta_g}f=\sum_{j=0}^\infty e^{-it\lambda_j^2}
E_jf.$$

\newcommand{\ten}{[9\la/10, 11\la/10]}

To prove \eqref{m.6} it clearly suffices to show that
for large $\lambda$ we have the uniform bounds
\begin{multline}\label{d.1}
\Bigl\| \, \eta(\la t) e^{it\Delta_g}f_\la
\Bigr\|_{L^p_tL^q_x(\R\times M)}\le C\|f_\la\|_{L^2(M)},
\\
\text{if } \, \, \mathrm{spec } \, f_\la
\subset [9\la/10, 11\la/10] \quad
\text{and } \, \, \,
\eta\in C^\infty_0((0,1)) \quad
\text{is fixed}.
\end{multline}

The assumption on the spectrum of $f_\la$ is that
$E_jf_\la=0$ if $\la_j\notin \ten$, and we choose
this interval since we are assuming that the 
Littlewood-Paley bump function arising in
Theorem~\ref{mainthm} satisfies
\begin{equation}\label{d.2}
\beta(s)=1 \quad \text{on } \, \,
[3/4,5/4] \quad
\text{and } \, \, \mathrm{supp } \,\beta
\subset (1/2,2).
\end{equation}

To be able to use \eqref{i.13} we note that, after
rescaling, \eqref{d.1} is equivalent to the
statement that
\begin{equation}\label{d.1'}\tag{3.1$'$}
\|w\|_{L^p_tL^q_x(\R\times M)}
\le C\la^{1/p} \|f_\la\|_{L^2(M)}, 
\quad \text{with } \, w(t,x)=\eta(t) \cdot e^{it\la^{-1}\Delta_g} f_\la(x).
\end{equation}

To be able to use Theorem~\ref{mainthm} we shall
use the following simple lemma.

\begin{lemma}\label{error}  Let $w$ be as in
\eqref{d.1'} with $\eta$ and $f_\la$ as in \eqref{d.1}
and suppose that the exponents $(p,q)$ are as in
\eqref{m.6}.
Then for large enough $\la$ and each $N=1,2,\dots$
we have the uniform bounds
\begin{equation}\label{d.3}
\bigl\| \, (I-\beta(-D_t/\la))w \,
\bigr\|_{L^p_tL^q_x(\R \times M)}
\le C_N \la^{-N}\|f_\la\|_{L^2(M)}.
\end{equation}
\end{lemma}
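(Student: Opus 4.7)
The plan is to exploit the fact that $w(t,x)$ is essentially time-frequency localized at $\tau\approx -\lambda$: each eigenmode $E_jf_\lambda$ with $\lambda_j\in[9\lambda/10,11\lambda/10]$ contributes a factor $e^{-it\mu_j}$ with $\mu_j:=\lambda_j^2/\lambda\in[0.81\lambda,\,1.21\lambda]\subset[3\lambda/4,\,5\lambda/4]$, and on this interval the symbol $\beta(-\tau/\lambda)$ equals $1$ by \eqref{d.2}. Writing
\begin{equation*}
f_\lambda=\sum_{\lambda_j\in[9\lambda/10,11\lambda/10]}E_jf_\lambda,\qquad w(t,x)=\eta(t)\sum_j e^{-it\mu_j}(E_jf_\lambda)(x),
\end{equation*}
and applying the Fourier multiplier $I-\beta(-D_t/\lambda)$, which acts only in $t$, I would obtain
\begin{equation*}
(I-\beta(-D_t/\lambda))w(t,x)=\sum_j h_j(t)\,(E_jf_\lambda)(x),\qquad h_j(t):=\frac1{2\pi}\int e^{it\tau}\bigl(1-\beta(-\tau/\lambda)\bigr)\hat\eta(\tau+\mu_j)\,d\tau.
\end{equation*}

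Next I would establish the uniform pointwise bound
\begin{equation*}
|h_j(t)|\le C_{M,N}\,\lambda^{-N}(1+|t|)^{-M}
\end{equation*}
for arbitrary $M,N\ge0$, valid for all relevant $j$ and all $\lambda\ge1$. The key observation is that on the support of $1-\beta(-\tau/\lambda)$ one has $-\tau/\lambda\notin[3/4,5/4]$, so $|\tau+\mu_j|\ge c\lambda$ for an absolute $c>0$, using $\mu_j\in[0.81\lambda,1.21\lambda]$. Combined with the Schwartz decay of $\hat\eta$ this forces
\begin{equation*}
\bigl|\,(1-\beta(-\tau/\lambda))\,\hat\eta(\tau+\mu_j)\,\bigr|\le C_N\lambda^{-N}(1+|\tau+\mu_j|)^{-2}
\end{equation*}
for every $N$, so the trivial estimate handles $|t|\lesssim 1$. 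For $|t|\ge1$, $M$-fold integration by parts in $\tau$ produces a factor $|t|^{-M}$ at the cost of $M$ derivatives distributed between $1-\beta(-\tau/\lambda)$ (each costing $\lambda^{-1}$ and remaining supported where $|\tau+\mu_j|\gtrsim\lambda$) and the Schwartz factor $\hat\eta(\tau+\mu_j)$, and the same pointwise argument delivers the required $\lambda^{-N}$.

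Finally, I would write $\sum_j h_j(t)(E_jf_\lambda)(x)=H_t(P)f_\lambda(x)$ for a smooth function $H_t$ supported in $[3\lambda/4,5\lambda/4]$ with $H_t(\lambda_j)=h_j(t)$, so that $\sup_\mu|H_t(\mu)|\lesssim\sup_j|h_j(t)|$. Applying Lemma~\ref{soblemma} with $p=2$ (which is just the usual Bernstein bound $\|u\|_{L^q(M)}\lesssim\lambda^{n(1/2-1/q)}\|u\|_{L^2(M)}$ for spectrally $\lambda$-localized $u$) yields
\begin{equation*}
\|H_t(P)f_\lambda\|_{L^q(M)}\le C_{M,N}\,\lambda^{n(1/2-1/q)-N}(1+|t|)^{-M}\|f_\lambda\|_{L^2(M)}.
\end{equation*}
Choosing $M>1/p$ so that $(1+|t|)^{-M}\in L^p(\mathbb R)$, and then $N$ large enough to absorb the polynomial factor $\lambda^{n/2}$, delivers \eqref{d.3}. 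The only substantive step is the pointwise bound on $h_j$: the plateau $\beta\equiv1$ on $[3/4,5/4]$ in \eqref{d.2} and the spectral window $[9\lambda/10,11\lambda/10]$ in \eqref{d.1} have been calibrated precisely so that these ranges have positive separation, and tracking the trade-off between $|t|$-decay and $\lambda$-gain in the integration by parts is the only mildly delicate point.
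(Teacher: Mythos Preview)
Your proof is correct and rests on the same key observation as the paper's: the time-frequency support of each mode $e^{-it\lambda_j^2/\lambda}$ falls in $[-5\lambda/4,-3\lambda/4]$, precisely where $1-\beta(-\tau/\lambda)$ vanishes, so the symbol $(1-\beta(-\tau/\lambda))\hat\eta(\tau+\mu_j)$ is supported where $|\tau+\mu_j|\gtrsim\lambda$ and Schwartz decay of $\hat\eta$ gives arbitrary $\lambda^{-N}$ gain.

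The execution differs slightly. The paper stays on the $L^2$ side throughout: it first applies the spatial Sobolev bound $\|u\|_{L^q}\lesssim\lambda\|u\|_{L^2}$, then swaps the order of norms by Minkowski, uses Sobolev in $t$ to pass from $L^p_t$ to $L^2_t$, exploits orthogonality of the $E_jf_\lambda$, and finishes with Plancherel in $\tau$ to read off the $\lambda^{-N}$ directly from the symbol. You instead establish a \emph{pointwise} bound $|h_j(t)|\lesssim\lambda^{-N}(1+|t|)^{-M}$ via integration by parts in $\tau$, then apply the Bernstein inequality (Lemma~\ref{soblemma}) for each fixed $t$ and integrate the resulting scalar weight in $t$. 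Your route is arguably more elementary---it avoids the Minkowski swap, the time-Sobolev step, and the orthogonality argument---at the modest cost of tracking the $t$-decay by hand. Both are standard and equally efficient here.
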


\begin{proof}  We first note the Fourier transform
of $t\to \eta(t)e^{-it\la^{-1}\la^2_j}$ is
$\Hat \eta(\tau+\la_j^2/\la)$ and so
\begin{equation}\label{d.4}
\bigl(I-\beta(-D_t/\la)\bigr)w(t,x)
= \sum_{\la_j\in \ten} a(t;\la_j) E_jf_\la(x),
\end{equation}
where
\begin{equation}\label{d.5}
a(t;\mu)=(2\pi)^{-1}
\int_{-\infty}^\infty e^{it\tau}
\Hat \eta(\tau+\mu^2/\la) \, 
\big(1-\beta(-\tau/\la)\bigr) \, d\tau.
\end{equation}

Since for $q$ as in \eqref{m.6} we have $2<q\le 2n/(n-2)$ for $n\ge 3$ and $2<q<\infty$ for $q=2$, 
the following Sobolev estimates are valid
\begin{equation}\label{d.s}
\|u\|_{L^q(M)}\lesssim \| \, (I-\Delta_g)^{1/2}u\, \|_{L^2(M)}.
\end{equation}
Therefore, by the spectral theorem,
\begin{equation}\label{d.6}
\bigl\| \sum_{\la_j\in \ten} a(t;\la_j)E_jf_\la \bigl\|_{L^q(M)}\lesssim 
\la \bigl\| \sum_{\la_j\in \ten} a(t;\la_j)E_jf_\la \bigl\|_{L^2(M)}.
\end{equation}

Next, since $2\le p<\infty$, by Minkowski's inequality and Sobolev's theorem for $\R$ we therefore have
\begin{align*}
\|(I-\beta(-D_t/\la))w\|_{L^p_tL^q_x(\R\times M)} &\lesssim 
\la \bigl\| \sum_{\la_j\in \ten} a(t;\la_j)E_jf_\la \bigl\|_{L^p_tL^2_x(\R\times M)}
\\
&\le \la \bigl\| \sum_{\la_j\in \ten} a(t;\la_j)E_jf_\la \bigl\|_{L^2_xL^p_t(\R\times M)}
\\
&\le \la \bigl\| \sum_{\la_j\in \ten} \, |D_t|^{1/2-1/p}a(t;\la_j)E_jf_\la \bigl\|_{L^2_xL^2_t(\R\times M)}.
\end{align*}
Since, by orthogonality
\begin{multline*} \bigl\| \sum_{\la_j\in \ten} \, |D_t|^{1/2-1/p}a(t;\la_j)E_jf_\la \bigl\|_{L^2_xL^2_t(\R\times M)}^2
\\
=   \sum_{\la_j\in \ten} \, \bigl\|
\, |D_t|^{1/2-1/p}a(t;\la_j)E_jf_\la \bigl\|_{L^2_xL^2_t(\R\times M)}^2,
\end{multline*}
we conclude that
\begin{multline}\label{d.7}
\|(I-\beta(-D_t/\la)w\|_{L^p_tL^q_x(\R\times M)}
\\
\lesssim \la \, \bigl(\sup_{\mu\in \ten} \bigl\| \, |D_t|^{1/2-1/p}a(t;\mu)\bigr\|_{L^2_t(\R)}\bigr) \cdot \|f_\la\|_{L^2(M)}.
\end{multline}

Next,  by Plancherel's theorem, \eqref{d.2} and \eqref{d.5},
\begin{align*}
\| \, |D_t|^{1/2-1/p}a(t;\mu)\|_{L^2_t(\R)}^2 &=(2\pi)^{-1} \int_{-\infty}^\infty
|\tau|^{1-2/p} \, \bigl|\Hat \eta(\tau + \mu^2/\la)\bigr|^2 \, \bigl|(1-\beta(-\tau/\la))\bigr|^2 \, d\tau
\\
&\lesssim \int_{\tau\notin [-5/\la/4,\, -3\la/4]} |\tau|^{1-2/p} \, |\Hat \eta(\tau+\mu^2/\la)|^2 \, d\tau.
\end{align*}
Note that $|\tau+\mu^2/\la|\approx (|\tau|+\la)$ if if $\tau\notin [-5\la/4,-3\la/4]$ and
$\mu\in\ten$, and since $\Hat \eta\in {\mathcal S}(\R)$ the preceding inequality leads to the trivial bounds
\begin{equation}\label{d.8}
\sup_{\mu\in \ten} \| \, |D_t|^{1/2-1/p}a(t;\mu)\|_{L^2_t(\R)} \lesssim \la^{-N}.
\end{equation}
Combining this inequality with \eqref{d.7} yields \eqref{d.3}.
\end{proof}

Using the lemma and the first estimate in Theorem~\ref{mainthm} it is very easy to prove 
\eqref{d.1'}.  We first note that we may apply this
Theorem, since if $w$ is as in \eqref{d.1'},
\begin{equation}
(i\la\partial_t+\Delta_g+i\la)w(t,x)
=\bigl(i\la \eta'(t)+i\la\eta(t)\bigr)\cdot
e^{it\la^{-1}\Delta_g}f_\la(x)
\quad \text{vanishes } \, \, \text{if }\, \,
t\notin [0,1],
\end{equation}
and $w(0,x)=0$.
Therefore by \eqref{i.13} and \eqref{d.3} we have
\begin{align}\label{d.11}
\|w&\|_{L^p_tL^q_x(\R\times M)}
\\
&\le \|\beta(-D_t/\la)w\|_{L^p_tL^q_x(\R\times M)}
+ \|(I-\beta(-D_t/\la))w\|_{L^p_tL^q_x(\R\times M)} \notag
\\
&\lesssim \la^{-1+1/p}
\bigl\| \, i\la(\eta'(t)+\eta(t)) \cdot
e^{it\la^{-1}\Delta_g}f_\la \, \bigr\|_{L^2(\R\times M)}
+\la^{-N}\|f_\la\|_{L^2(M)}
\notag
\\
&\lesssim 
\la^{1/p}\|f_\la\|_{L^2(M)},
\notag
\end{align}
as desired.

\medskip

Let us now prove dyadic high-frequency estimates
for $e^{-itH_V}$ where
\begin{equation}\label{d.12}
H_V=-\Delta_g+V
\end{equation}
with 
\begin{equation}\label{d.13}
V\in L^{n/2}(M) \, \, \text{if } \, 
\, n\ge 3, \, \, \text{and } \, \, 
V\in L^{1+\delta}(M), \, \, \, \text{some } \, 
\delta>0 \, \, \text{if } \, \, n=2.
\end{equation}
Let us focus first on the case where $n\ge3$ and then
handle $n=2$ later.

Under the assumption \eqref{d.13} $H_V$ defines a self-adjoint operator which is bounded from below.  
See e.g., \cite{BHSS}.  We wish to prove the analog
of \eqref{m.6} for the operators $e^{-itH_V}$.  If
necessary, we may add a constant to $V$ so that
\begin{equation}\label{d.14}
H_V\ge 0
\end{equation}
as we shall always assume.  This will not affect
our estimates, since, if we, say add the constant
$N$ to $V$ the two different Schr\"odinger operators
will agree up to a factor $e^{\pm itN}$.

Just as with the $V=0$ case, the eigenvalues of the operator $\sqrt{H_V}$ (defined by the spectral theorem)
are nonnegative, discrete and tend to infinity.  We
can list them counting multiplicity as 
$0\le \mu_1\le \mu_2\le \dots$, and there is an
associated orthonormal basis of eigenfunctions $\{e^V_j\}$
$$H_V e^V_j =\mu_j^2 e^V_j \quad \text{with }
\, \, \int_M |e^V_j|^2=1.$$
Analogous to the $V=0$ case, let $E^V_j$ denote the projection onto the $j$th eigenspace,
$$E^V_jf=\bigl(\int_M f\, \overline{e^V_j}\bigr) \cdot e^V_j.$$

Then for large $\lambda$ we wish to prove the analog of \eqref{m.6}:
\begin{equation}\label{d.15}
\bigl\|e^{-itH_V}f_\la\bigr\|_{L^p_tL^q_x([0,\la^{-1}]\times M)}
\le C\|f_\la\|_{L^2(M)} \quad
\text{if } \, \mathrm{spec } \,f_\la \in \ten,
\end{equation}
with the condition meaning that $E^V_jf_\la=0$ if
$\la_j\notin \ten$.  We are assuming the exponents
$(p,q)$ are as in \eqref{m.6}.  For later use, we note that since $e^{-itH_V}$ is a unitary operator
on $L^2(M)$ this estimate yields the unit-scale bounds
\begin{equation}\label{d.15'}\tag{3.15$'$}
\bigl\|e^{-itH_V}f_\la\bigr\|_{L^p_tL^q_x([0,1]\times M)}
\le C \la^{1/p} \, \|f_\la\|_{L^2(M)} \quad
\text{if } \, \mathrm{spec } \,f_\la \in \ten,
\end{equation}

Since, by the spectral theorem
$$\|e^{-itH_V}\|_{L^2(M)\to L^2(M)}=1,
$$
the estimate trivially holds for $p=\infty$ and
$q=2$.  Therefore, by interpolation, 
since we are currently assuming that $n\ge3$ it 
suffices to prove the estimate for the other endpoint,
i.e., that for $f_\la$ as in \eqref{d.15}
we have
\begin{equation}\label{d.16}
\bigl\|e^{-itH_V}f_\la\bigr\|_{L^2_tL^{2n/(n-2)}_x([0,\la^{-1}]\times M)}
\le C\|f_\la\|_{L^2(M)}.
\end{equation}
By scaling, this is equivalent to the statement
that, for $f_\la$ as above, we have
\begin{equation*}
\bigl\|e^{-it\la^{-1}H_V}f_\la\bigr\|_{L^2_tL^{2n/(n-2)}_x([0,1]\times M)}
\le C\la^{1/2}\|f_\la\|_{L^2(M)}.
\end{equation*}
Finally, as before, this is equivalent to showing that
whenever $$\eta\in C^\infty((0,1))$$ is fixed we have
\begin{equation} \label{d.17}
\bigl\|w\bigr\|_{L^2_tL^{2n/(n-2)}_x(\R\times M)}
\le C\la^{1/2}\|f_\la\|_{L^2(M)},
\quad \text{with } \, \, w(t,x)= \eta(t)\cdot 
e^{-it\la^{-1}H_V}f_\lambda,
\end{equation}
with $f_\lambda$ as above.

To proceed we need the analog of Lemma~\ref{error}.

\begin{lemma}\label{Verror}  Let $n\ge2$ and let $w$ be as in
\eqref{d.17} with $\eta\in C^\infty_0((0,1))$ and $f_\la$ as in \eqref{d.15}
and suppose that the exponents $(p,q)$ are as in
\eqref{m.6}.
Then for large enough $\la$ and each $N=1,2,\dots$
we have the uniform bounds
\begin{equation}\label{d.18}
\bigl\| \, (I-\beta(-D_t/\la))w \,
\bigr\|_{L^p_tL^q_x(\R \times M)}
\le C_N \la^{-N}\|f_\la\|_{L^2(M)}.
\end{equation}
\end{lemma}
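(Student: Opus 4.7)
The plan is to run the argument of Lemma~\ref{error} essentially verbatim, but with the spectral decomposition of $-\Delta_g$ replaced by that of $H_V$. Expanding $f_\la$ with respect to the orthonormal basis $\{e^V_j\}$ (noting that the hypothesis $\mathrm{spec}\, f_\la\subset\ten$ means $E^V_jf_\la=0$ unless $\mu_j\in\ten$) and applying $e^{-it\la^{-1}H_V}$ eigenspace by eigenspace, I can write
\begin{equation*}
w(t,x)=\eta(t)\sum_{\mu_j\in\ten}e^{-it\mu_j^2/\la}E^V_jf_\la(x).
\end{equation*}
Taking the Fourier transform in $t$ of each summand produces $\Hat\eta(\tau+\mu_j^2/\la)$, so
\begin{equation*}
(I-\beta(-D_t/\la))w(t,x)=\sum_{\mu_j\in\ten}a(t;\mu_j)E^V_jf_\la(x),
\end{equation*}
with $a(t;\mu)$ exactly the same symbol defined by \eqref{d.5} in the proof of Lemma~\ref{error}.

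The key spatial step is a substitute for the Sobolev bound \eqref{d.s}. Since the pair $(p,q)$ is as in \eqref{m.6}, the exponent $q$ lies in the range where the Sobolev inequality $\|u\|_{L^q(M)}\lesssim \|(I+H_V)^{1/2}u\|_{L^2(M)}$ holds for $V$ satisfying \eqref{d.13}; this follows from the Littlewood--Paley estimates for $H_V$ proved in the appendix and in our joint work \cite{BHSS}. Applied to $u=\sum_{\mu_j\in\ten}a(t;\mu_j)E^V_jf_\la$, which has $H_V$-spectrum in $\ten$, the spectral theorem gives $\|(I+H_V)^{1/2}u\|_{L^2}\lesssim \la\|u\|_{L^2}$, yielding the $H_V$-analog of \eqref{d.6}:
\begin{equation*}
\Bigl\|\sum_{\mu_j\in\ten}a(t;\mu_j)E^V_jf_\la\Bigr\|_{L^q(M)}\lesssim \la\,\Bigl\|\sum_{\mu_j\in\ten}a(t;\mu_j)E^V_jf_\la\Bigr\|_{L^2(M)}.
\end{equation*}

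From here, Minkowski's inequality in $t\in\R$ (valid since $p\ge 2$), the one-dimensional Sobolev embedding $L^2_t\supset |D_t|^{1/2-1/p}L^p_t$, and $L^2$-orthogonality of the $\{E^V_jf_\la\}$ reproduce the analog of \eqref{d.7}:
\begin{equation*}
\|(I-\beta(-D_t/\la))w\|_{L^p_tL^q_x(\R\times M)}\lesssim \la\,\Bigl(\sup_{\mu\in\ten}\bigl\| |D_t|^{1/2-1/p}a(t;\mu)\bigr\|_{L^2_t(\R)}\Bigr)\|f_\la\|_{L^2(M)}.
\end{equation*}
The supremum is exactly the quantity controlled by \eqref{d.8}: since $(1-\beta(-\tau/\la))$ vanishes on $[-5\la/4,-3\la/4]$ while $\mu^2/\la\in \ten$ forces $|\tau+\mu^2/\la|\approx |\tau|+\la$ off this window, Plancherel plus rapid decay of $\Hat\eta$ give $O(\la^{-N})$ for every $N$, which yields \eqref{d.18}.

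The only step that differs in substance from Lemma~\ref{error} is the Sobolev inequality for $H_V$ in the range of $q$ from \eqref{m.6}; this is the main obstacle, and it is the reason we cannot avoid invoking the Littlewood--Paley/uniform Sobolev machinery for $H_V$ from \cite{BHSS} and the appendix. Once that is available, all remaining steps depend only on the orthonormality of $\{e^V_j\}$ in $L^2(M)$ and on the time-Fourier behavior of $a(t;\mu)$, both of which transfer without change from the $V\equiv 0$ setting.
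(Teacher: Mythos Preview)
Your proposal is correct and follows essentially the same approach as the paper: the paper's proof of Lemma~\ref{Verror} simply observes that the Sobolev estimate $\|u\|_{L^q(M)}\lesssim \|(I+H_V)^{1/2}u\|_{L^2(M)}$ is known from \cite{BHSS} and \cite{BSS} (rather than from the Littlewood--Paley appendix), and that once this replaces \eqref{d.s} the proof of Lemma~\ref{error} goes through verbatim. One small slip: in your last paragraph it is $\mu\in\ten$ (not $\mu^2/\la\in\ten$) that forces $\mu^2/\la$ to lie near $\la$ and hence $|\tau+\mu^2/\la|\approx |\tau|+\la$ off the window $[-5\la/4,-3\la/4]$.
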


Since, for instance, by \cite{BHSS} and  \cite{BSS}  we have the analog of
\eqref{d.s},
$$
\|u\|_{L^q(M)}\lesssim \|(I+H_V)^{1/2}u\|_{L^2(M)},
$$
for $q$ as in \eqref{m.6} it is clear that the proof of Lemma~\ref{error}
yields \eqref{d.18}.  For the two-dimensional case one uses the fact that, if,
as we are assuming $V\in L^{1+\delta}(M)$, $\delta>0$, then
$V$ is in the Kato class ${\mathcal K}(M)$.

\medskip

We now are positioned to prove \eqref{d.17}.  To take
advantage of our assumption \eqref{d.13} for a given
large $\ell>1$, as in \cite{BHSS}, let us split
$$V=V_{\le \ell}+V_{>\ell},
$$
where
$$V_{>\ell}(x)=V(x) \, \, \text{if } \, |V(x)|> \ell
\, \, \text{and } \, \, 0 \, \, \text{otherwise}.
$$
Our assumption \eqref{d.13} then yields
\begin{equation}\label{d.19}
\|V_{>\ell}\|_{L^{n/2}(M)}=\delta(\ell), \quad
\text{with } \, \, \delta(\ell)\to 0 \, \, 
\text{as } \, \ell\to \infty,
\end{equation}
and we also trivially have
\begin{equation}\label{d.20}
\|V_{\le \ell}\|_{L^\infty(M)}\le \ell.
\end{equation}

To use this we note that since $-H_V=\Delta_g-V$
\begin{align*}(i\la\partial_t+\Delta_g+i\la)w
&= (i\la\partial_t-H_V+i\la)w +Vw
\\
&= (i\la\partial_t-H_V+i\la)w +V_{\le \ell}\, w +V_{> \ell} \, w,
\end{align*}
and also $w(0,\cd)=0$.
So we can split
\begin{equation}\label{d.21}
w=\widetilde w+ w_{\le \ell} + w_{>\ell},
\end{equation}
where
\begin{equation}\label{d.22}
(i\la\partial_t+\Delta_g+i\la)\widetilde w
= (i\partial_t-H_V+i\la)w =\widetilde F, \quad 
\tilde w(0,\cd)=0,
\end{equation}
\begin{equation}\label{d.23}
(i\la\partial_t+\Delta_g+i\la)w_{\le \ell}= V_{\le \ell} \, w
=F_{\le \ell}, \quad w_{\le \ell}(0,\cd)=0,
\end{equation}
and
\begin{equation}\label{d.24}
(i\la\partial_t+\Delta_g+i\la)w_{> \ell}= V_{> \ell}\, w
=F_{> \ell}, \quad w_{> \ell}(0,\cd)=0,
\end{equation}
Note that since $w(t,x)=0$, $t\notin (0,1)$ each
of the forcing terms $\widetilde F$, $F_{\le \ell}$ 
and $F_{>\ell}$ also vanishes for such $t$ which allows
us to apply the estimates in Theorem~\ref{mainthm}
for $\widetilde w$, $w_{\le \ell}$ and $w_{>\ell}$.

By \eqref{d.18} and \eqref{d.21} we have for each $N=1,2,\dots$
\begin{align}\label{d.25}
\bigl\|w&\bigr\|_{L^2_tL^{2n/(n-2)}_x(\R\times M)}
\\
&\le \bigl\|\beta(-D_t/\la)w\bigr\|_{L^2_tL^{2n/(n-2)}_x(\R\times M)}+C_N \la^{-N}\|f\|_{L^2(M)} \notag
\\
&\le
\bigl\|\beta(-D_t/\la)\widetilde w\bigr\|_{L^2_tL^{2n/(n-2)}_x(\R\times M)}
+ \bigl\|\beta(-D_t/\la)w_{\le \ell}\bigr\|_{L^2_tL^{2n/(n-2)}_x(\R\times M)} \notag
\\
&+
\bigl\|\beta(-D_t/\la)w_{>\ell}\bigr\|_{L^2_tL^{2n/(n-2)}_x(\R\times M)}+ C_N \la^{-N}\|f\|_{L^2(M)}.
\notag
\end{align}
Based on this  we
would obtain \eqref{d.17} if we could show that
$\ell$ could be fixed large enough so that
we have the following three inequalities
\begin{equation}\label{d.26}
\bigl\| \beta(-D_t/\la) \widetilde w\bigr\|_{L^2_tL^{2n/(n-2)}_x(\R\times M)}
\le C\la^{1/2}\|f_\la\|_{L^2(M)},
\end{equation}
as well as
\begin{equation}\label{d.27}
\bigl\| \beta(-D_t/\la) w_{\le \ell}\bigr\|_{L^2_tL^{2n/(n-2)}_x(\R\times M)}
\le C\ell\la^{1/2}\|f_\la\|_{L^2(M)},
\end{equation}
and finally
\begin{equation}\label{d.28}
\bigl\| \beta(-D_t/\la) w_{> \ell}\bigr\|_{L^2_tL^{2n/(n-2)}_x(\R\times M)}
\le \tfrac12 \bigl\|w\bigr\|_{L^2_tL^{2n/(n-2)}_x(\R\times M)}.
\end{equation}
Indeed we just combine \eqref{d.25}--\eqref{d.28}
and use a simple bootstrapping argument which is justified since the right side of \eqref{d.28} is finite
by the aforementioned Sobolev estimates for $H_V$.

To prove these three estimates we shall use Theorem~\ref{mainthm},  as we may,  since,
as we mentioned before, the forcing terms in \eqref{m.22},
\eqref{m.23} and \eqref{m.24} obey the support assumption
in \eqref{m.14}.

To prove \eqref{i.13} we note that if $\widetilde F$
is as an \eqref{d.22} then, since $w$ is as in \eqref{d.17}, we have
$$\widetilde F(t,x)
= (i\partial_t-H_V+i\la) \bigl(\eta(t)e^{-it\la^{-1}H_V}f_\la(x)\bigr)
=i\la(\eta'(t)+\eta(t)) e^{-it\la^{-1}H_V}f_\la(x).$$
Consequently, as in the $V=0$ case considered before,
we may use the $L^2$-estimate, \eqref{i.13}, in
Theorem~\ref{mainthm} to deduce
that
\begin{align*}
\bigl\| \beta(-D_t/\la) \widetilde w\bigr\|_{L^2_tL^{2n/(n-2)}_x(\R\times M)}
&\le \la^{-1/2}\| i\la(\eta'(t)+\eta(t))\cdot e^{-it\la^{-1}H_V}f_\la\|_{L^2_{t,x}(\R\times M)}
\\
&\lesssim \la^{1/2} \|f_\la\|_{L^2(M)},
\end{align*}
as desired.
Similarly,  by \eqref{d.19} and 
the formula for $w$ in \eqref{d.17}, we obtain
\begin{align*}\bigl\| \beta(-D_t/\la)  w_{\le \ell}\bigr\|_{L^2_tL^{2n/(n-2)}_x(\R\times M)}
&\le C\la^{-1/2}\|V_{\le \ell} \, w\|_{L^2_{t,x}(\R\times M)}
\\
&\le C\ell \la^{-1/2}\| \eta(t) \cdot e^{-it\la^{-1}H_V}f_\la\|_{L^2_{t,x}(\R\times M)}
\\
&\le C'\ell \la^{-1/2}\|f_\la\|_{L^2(M)},
\end{align*}
which is better than the inequality posited in \eqref{d.27}.

Up until now we have not used the second inequality
in Theorem~\ref{mainthm}.  We need it to obtain
\eqref{d.28} which allows the bootstrapping step.
Note that
$$\frac1{q'}-\frac1q=\frac2n, \quad \text{if } \, \,
q=2n/(n-2), \,  \, q'=2n/(n+2).$$
Consequently if we use \eqref{i.14}, \eqref{d.24}, H\"older's inequality and \eqref{d.19} than
we conclude that we can
fix $\ell$ large enough so that we have
\begin{align*}\bigl\| \beta(-D_t/\la)  w_{> \ell}\bigr\|_{L^2_tL^{2n/(n-2)}_x(\R\times M)}
&\le C\|V_{>\ell} \, w\|_{L^2_tL^{2n/(n+2)}_x(\R\times M)}
\\
&\le C\|V_{>\ell}\|_{L^{n/2}(M)} \cdot 
\| w  \|_{L^2_tL^{2n/(n-2)}_x(\R\times M)}
\\
&\le \tfrac 12 \| w  \|_{L^2_tL^{2n/(n-2)}_x(\R\times M)},
\end{align*}
assuming, as we may, in the last step that, if $\delta(\ell)$ is as
in \eqref{d.19},  $C\delta(\ell)\le \tfrac12$.
Since this is the last of the three inequalities
we had to prove, we have established \eqref{d.17}
and hence \eqref{d.16}.

Next, let us point out that for functions only involving
low frequencies we have these types of estimates for unit time scales
for all of the exponents in \eqref{m.6} in all dimensions.
In other words if $C_0<\infty$ is fixed we claim that
\begin{equation}\label{d.29}
\bigl\|e^{-itH_V}f\bigr\|_{L^p_tL^{q}_x([0,1]\times M)}
\lesssim \|f\|_{L^2(M)}, \quad 
\text{if } \, \, \mathrm{spec } \, f
\subset [0,C_0].
\end{equation}
To see this we just note that by the Sobolev estimates
that were used in the proof of Lemma~\ref{Verror}
we have the following uniform bounds for all times $t$:
$$\| e^{-itH_V}f\|_{L^q(M)}
\lesssim \|\sqrt{I+H_V} \, e^{-itH_V}f\|_{L^2(M)}
\lesssim (1+C_0)\|f\|_{L^2(M)},
$$
by the spectral theorem
for $f$ as in \eqref{d.29}.

\medskip

Next, let us show that, for large enough $\lambda$,
when $n=2$ we have the estimates
in \eqref{d.15} for each fixed $(p,q)$ as in \eqref{m.6}.  Here
we can take advantage of the fact that we must have
$p>2$ and so the power of $\lambda$ in \eqref{i.14} is negative.  Since the bounds in \eqref{i.14} blow up
as the exponents in \eqref{d.15} approach the ``forbidden'' pair $(p,q)=(2,\infty)$
for $n=2$, one needs to choose $\lambda$ larger and
larger as $q$ increases.   On the other hand,
by interpolation, if we can establish
\eqref{i.14} for a given $q_0$ and large enough $\lambda$,
as before, by a trivial interpolation argument, we also
 obtain the bounds for all $q\in (2,q_0)$.
To take advantage of our assumption on the potential
in \eqref{d.13}, let us thus fix an exponent $q$
sufficiently large so that
\begin{equation}\label{d.30}\frac1{q'}-\frac1{q}\ge \frac1{1+\delta}.
\end{equation}
We then can just split $w$ into two terms, 
$w=\widetilde w+ w_V$,
one
being $\widetilde w$ exactly as before and the other
now solving 
$$(i\la\partial_t+\Delta_g+i\la)w_{V}= Vw
, \quad w_{V}(0,\cd)=0.$$
In other words, $w_V=w_{\le \ell}+w_{>\ell}$.

If we repeat the arguments for the $n\ge 3$ case we 
then deduce that we would have the estimates in
\eqref{d.15} for our exponents $(p,q)$ if 
\begin{equation*}
\bigl\| \beta(-D_t/\la) \widetilde w\bigr\|_{L^p_tL^{q}_x(\R\times M)}
\le C\la^{1/p}\|f_\la\|_{L^2(M)},
\end{equation*}
as well as
\begin{equation*}
\bigl\| \beta(-D_t/\la) w_{V}\bigr\|_{L^p_tL^{q}_x(\R\times M)}
\le \tfrac12 \bigl\|w\bigr\|_{L^p_tL^{q}_x(\R\times M)},
\end{equation*}
assuming that $\la$ is sufficiently large depending on $q$.

The first inequality follows from the argument used
before.  One just uses \eqref{i.13}.  

To prove
the second inequality we repeat the proof of 
\eqref{d.28}, noting that our assumptions on $q$
and $V$ ensure that, by H\"older's inequality $\|V\|_{L^r(M)}\le C_M \|V\|_{L^{1+\delta}(M)}<\infty$,
where $1/r=1/q'-1/q$, due to \eqref{d.30}.
As a result, if we use \eqref{i.14} and repeat
the proof of \eqref{d.28} we conclude that since
$w(t,x)=0$ for $t\notin [0,1]$
the left side of the second inequality is dominated
by 
\begin{multline*}\la^{-1+2/p}\|V w\|_{L^{p'}_tL^{q'}_x
([0,1]\times M)}
\le C_q \la^{-1+2/p}\|V\|_{L^r(M)} \, \|w\|_{L^{p'}_tL^{q}_x
([0,1]\times M)}
\\
\le C_{q,V} \la^{-1+2/p} \|w\|_{L^{p}_tL^{q}_x ([0,1]\times M)}
<\tfrac12 \|w\|_{L^p_tL^{q}_x},
\end{multline*}
for large enough $\lambda$ since $-1+2/p<0$.  In 
the second inequality we used H\"older's inequality
in the $t$ variable and the fact that $p>p'$.

\subsection*{Proof of Theorem~\ref{hvthm}}

Let us conclude the section by showing that the dyadic estimates that we have obtained can be used
along with Littlewood-Paley estimates associated with $H_V$ yield Theorem~\ref{hvthm}.  For
the sake of completeness, we shall give the simple proof of the Littlewood-Paley estimates involving
singular potentials in an appendix.

Let us state the estimates we require.  Recall that we are assuming as in \eqref{d.14}, as we may, that
$H_V\ge 0$, and so we may consider the operator $P_V=\sqrt{H_V}$.   If $\beta$ as in \eqref{i.7} and 
\eqref{i.8} is our Littlewood-Paley bump function, let
$$\beta_0(s)=1-\sum_{j=0}^\infty \beta(2^{-j}s)\in C^\infty_0([0,2)).$$
We shall then use the Littlewood-Paley estimates
\begin{equation}\label{d.31}
\|h\|_{L^q(M)}\lesssim \|\beta_0(H_V)h\|_{L^q(M)}\, +\, 
\bigl\| \, \bigl(\sum_{j=0}^\infty |\beta(  P_V/2^{j})h|^2\bigr)^{1/2}\, \bigr\|_{L^q(M)},
\end{equation}
provided that $V$ is as in Theorem~\ref{hvthm} and
\begin{equation}\label{d.32}
1<q<\infty \, \, \text{if } \, \, n=2,3,4 \quad \text{and } \, \, \tfrac{2n}{n+4}<q<\tfrac{2n}{n-4} \, \, \text{if } \, \, n\ge 5.
\end{equation}

We also note that since $\tfrac{2n}{n-2}<\tfrac{2n}{n-4}$ when $n\ge 5$, the exponents here include the exponents
$q$ arising in \eqref{i.2}.  Also, since $p\ge2$ and $q\ge 2$ if $(p,q)$ are as in \eqref{i.2}, we obtain from
\eqref{d.31} and Minkowski's inequality that we have for such exponents
\begin{multline}\label{d.33}
\| e^{-itH_V} f\|_{L^p_tL^q_x([0,1]\times M)}\lesssim \|\beta_0(H_V) e^{-itH_V} )f\|_{L^p_tL^q_x([0,1]\times M)}
\\
+\Bigl(\, \sum_{j=0}^\infty \bigl\|\beta( P_V/2^{j}) e^{-itH_V}f\bigr\|_{L^p_tL^q_x([0,1]\times M)}^2\, \Bigr)^{1/2}.
\end{multline}
Additionally, by \eqref{d.29} we have
\begin{equation}\label{d.34}
 \|\beta_0(H_V) e^{-itH_V} f\|_{L^p_tL^q_x([0,1]\times M)}
 \le \|\beta_0(H_V)f\|_{L^2(M)}.
 \end{equation}
 Similarly if we use \eqref{d.29} for small $j\ge 0$ and \eqref{d.15'} for large $j$ we obtain
\begin{equation}\label{d.35}
 \bigl\|\beta( P_V/2^{j}) e^{-itH_V}f\bigr\|_{L^p_tL^q_x([0,1]\times M)}  \le C_V 2^{j/p}
 \|\beta( P_V/2^{j})f\|_{L^2(M)}, \quad j=0,1,\dots,
 \end{equation}
 for some uniform constant $C_V<\infty$.
 
 If we recall \eqref{i.7} and \eqref{i.8} and combine \eqref{d.33}, \eqref{d.34} and \eqref{d.35} and use the spectral theorem
 we deduce that
 \begin{multline}\label{d.36}
 \| e^{-itH_V} f\|_{L^p_tL^q_x([0,1]\times M)} \lesssim \|\beta_0(H_V)f\|_{L^2(M)}
 +\Bigl(\, \sum_{j=0}^\infty \|2^{j/p}\beta( P_V/2^{j})f\|_{L^2(M)}^2\, \Bigr)^{1/2}
 \\
 \lesssim \|\, (\sqrt{I+H_V})^{1/p}f\|_{L^2(M)}.
 \end{multline}

This does not quite give us the estimate \eqref{i.6} in Theorem~\ref{hvthm}, since the right hand side
of this inequality involves the Sobolev space $H^{1/p}(M)$ defined as in \eqref{i.3} by the operator
$\sqrt{I-\Delta_g}$ as opposed to Sobolev space defined by the operator $\sqrt{I+H_V}$ as in \eqref{d.36}.
This, though, is easy to rectify.  By standard arguments (see e.g., the appendix in \cite{BHSS}
for the case where $n\ge3$ and the one in \cite{BSS} for the two-dimensional case), for the potentials
we are considering and for the exponents $q$ as above we have
$$\|\sqrt{I-\Delta_g} f\|_{L^2(M)} \approx \|\sqrt{I+H_V}f\|_{L^2(M)},$$
which means that the two $L^2$-Sobolev spaces of order $1$ are comparable.  By
interpolation this means that we have 
\begin{equation}\label{d.37}
\|(\sqrt{I-\Delta_g})^\sigma f\|_{L^2(M)} \approx \|(\sqrt{I+H_V})^\sigma f\|_{L^2(M)}, \quad 0\le \sigma \le 1,
\end{equation}
since the estimate for $\sigma=0$ is trivial.

If we combine \eqref{d.36} and \eqref{d.37}  we obtain \eqref{i.6}, which completes the proof of Theorem~\ref{hvthm}.

\newsection{Sharpness of the quasimode estimates}\label{sharpsec}

Let us now show that our scaled quasimode estimates
\eqref{i.13}
cannot be improved on any compact manifold $(M,g)$.  We
do this by a ``Knapp-type'' construction that is
adapted to our scaled Schr\"odinger operators.

First, recall that we can choose local coordinates vanishing at a given point $x_0\in M$ so that, in these
coordinates,
\begin{equation}\label{s.1}
\Delta_g = \partial_1^2 +\sum_{1<j,k\le n}
g^{jk}(x)\partial_j\partial_k + \sum_{k=1}^n
b_j(x)\partial_k.
\end{equation}
Here, $\partial_k=\partial/\partial x_k$, $k=1,
\dots, n$.  Here $(g^{jk})_{1<j,k\le n}$ is a smooth real
positive definite matrix, and the $b_j$ are also smooth real-valued functions.  See, e.g., \cite[Appendix C.5]{HorIII}.

Fix $a\in C^\infty_0((-1/10,1/10))$ which equals one 
near the origin, and set
\begin{equation}\label{s.2}
w(t,x)=e^{i\la(x_1-t)}a(\lambda;t,x),
\end{equation}
where
\begin{equation}\label{s.3}
a(\la;t,x)=a(x_1+2(t-1/2))\, a(\la^{1/2}(x_1-2t))\, 
a(\la^{1/2}|x'|), \quad
\text{with } \, \, x'=(x_2,\dots, x_n).
\end{equation}
Due to the exponential factor, the space-time Fourier transform of $w$ is the space-time Fourier transform
of $a(\la;t,x)$ translated by $(-\la,\la,0,0,\dots,0)$,
where the first-coordinate here is dual to the time coordinate
and the rest dual to the $x$-coordinates. Since the Fouier transform of $a(\la;x,t)$
is $O((1+\la^{-1/2}|(\tau,\xi)|)^{-N}$ for all $N$, 
and since $\beta(s)=1$ for $s\in [3/4,5/4]$,
it follows that
\begin{equation}\label{s.4}
(I-\beta(-D_t/\la))w=O(\la^{-N}), \quad \forall \, N.
\end{equation}
Also, for each fixed $t$ near $1/2$, $x\to a(\la;t,x)$ 
is one on a set of measure $\approx \la^{-n/2}$.  Thus,
by \eqref{s.3} and \eqref{s.4}, for sufficiently
large $\la$ we have
\begin{equation}\label{s.5}
\|\beta(-D_t/\la)w\|_{L^p_tL^q_x([0,1]\times M)}
\ge \|w\|_{L^p_tL^q_x([0,1]\times M)} -O(\la^{-N})
\ge c\la^{-n/2q}, \, \, \, \la \, \, \, \text{large },
\end{equation}
for some $c>0$.

Note also that
\begin{equation}\label{s.6}
F(t,x)=(i\la\partial_t+\Delta_g+i\la)w(t,x) =0, 
\quad t\notin (0,1), 
\quad \text{and } \, \, w(0,x)=0.
\end{equation}

Next, let us observe that
\begin{equation}\label{s.7}
\Bigl(\sum_{1<j,k\le n}g^{jk}(x)\partial_j\partial_k + \sum_{k=1}^n
b_j(x)\partial_k+i\la\Bigr)w =O(\la).
\end{equation}
Also, if we rewrite $a(\la;t,x)$ as
\begin{multline}\label{s.8}
a(\la;t,x)=a(\la^{1/2}(x_1-2t))\cdot
\tilde a(\la;t,x), 
\\
\text{where } \, \,
\tilde a(\la;t,x)= a(x_1+2(t-1/2)) \, 
a(\la^{1/2}|x'|),
\end{multline}
then 
\begin{equation}\label{s.9}
i\la\partial_t\tilde a(\la;x,t)=O(\la), \, \, 
\text{and } \, \, 
\partial_1^j\tilde a(\la;x,t)=O(1), \, \, j=1,2.
\end{equation}
Consequently, by Leibniz's rule we have
\begin{multline}\label{s.10}
\bigl(i\la \partial_t+\partial_1^2\bigr)w(t,x)
=a(\la;t,x)\cdot \bigl(i\la \partial_t+\partial_1^2\bigr)
e^{i\la(x_1-t)}
\\
+ e^{i\la(x_1-t)}\cdot
i\la\partial_t\bigl(a(\la^{1/2}(x_1-2t))\bigr)
+2\partial_1 \bigl(a(\la^{1/2}(x_1-2t))\bigr)
\cdot \partial_1 \bigl(e^{i\la(x_1-t)}\bigr)+O(\la).
\end{multline}
Note that the first term in the right vanishes, as
does the sum of the second and third terms.  

Therefore, by \eqref{s.1} and \eqref{s.7}--\eqref{s.10},
we conclude that
if $F$ is as in \eqref{s.6}
we have
\begin{equation*}
F=O(\la),
\end{equation*}
and since $F$ is supported on a set of measure
$\approx \la^{-n/2}$, we deduce that
\begin{equation}\label{s.11}
\|F\|_{L^2_{t,x}(\R\times M)}\le \la^{1-n/4}.
\end{equation}

If we combine \eqref{s.11} and \eqref{s.5} we deduce
that there must be a $c_0>0$ so that for sufficiently
large $\la$ we have
\begin{multline}\label{s.12}
\frac{\|\beta(-D_t/\la)w\|_{L^p_tL^q_x(\R\times M)}}
{\|F\|_{L^2_{t,x}(\R\times M)}}
\ge c_0 \, \la^{-1} \cdot \, 
\la^{n(\frac{1}4-\frac{1}{2q})}= c_0\, \la^{-1+1/p},
\\ \text{if } \, \, n(1/2-1/q)=2/p.
\end{multline}

By \eqref{s.6} and \eqref{s.12}, we deduce
that our $L^2$-quasimode estimate \eqref{i.13}
is saturated on {\em any compact manifold}.

\subsection*{Some remarks}

A challenging problem is to determine when the results of Burq,G\'erard and Tzvetkov~\cite{bgtmanifold} can be
improved, even just for the $V\equiv0$ case.  As they point out, the sharpness of $O(\la^{1/2})$ 
bounds for the 
$L^{2n/(n-2)}(S^n)$-norms of $L^2$-normalized spherical harmonics of the second author~\cite{sogge86} imply that, on the sphere,
the $L^2_tL^{2n/(n-2)}_x(S^n)$ Strichartz estimates \eqref{i.6} cannot be improved when $V\equiv0$.  On the other
hand, they were able to use results from \cite{sogge86} and the special nature of the Laplacian on the sphere to show
that for many cases besides this endpoint Strichartz estimate improved bounds hold here.

More dramatically, Bourgain and Demeter~\cite{BourgainDemeterDecouple} were able to show on 
the torus ${\mathbb T}^n$  for the case
where $q=p=2(n+2)/n$ and $V\equiv0$, the analog of \eqref{i.6} is valid with any Sobolev norm $H^\e({\mathbb T}^n)$
in the right.  It does not seem clear, though, how much improvements are possible here as one approaches the
endpoint case of $(p,q)=(2,2n/(n-2))$ beyond what holds just by interpolation with \eqref{i.6} for this exponent
and the dramatic improvements for $p=q=2(n+2)/n$.  It would also be interesting to determine which singular
potentials could be added so that $e^{-itH_V}$ enjoys similar bounds for the latter pair of exponents on tori.  Related
partial results for resolvent problems were obtained in our joint work with Blair and Sire~\cite{BHSS}.

It would also be very interesting to determine whether there is a wide class of manifolds (beyond just spheres and tori) for which
some of the estimates in \eqref{i.6} could be improved even for $V\equiv0$.  This seems to be a very challenging problem.
One avenue, which is suggested by the Knapp example above and recent work on eigenfunctions (e.g., \cite{BlairSoggeToponogov}, \cite{SBLog}, \cite{SoggeKaknik}, \cite{sogge2015improved} and
 \cite{SoggeZelditchL4} ) might be to try to prove ``Kakeya-Nikodym'' estimates that link Strichartz estimates to ones
 involving products of powers of $L^2(M)$ norms and powers of supremums of  $L^2$-norms over shrinking tubes.
 
 The construction above suggests (not unexpectedly) that the tubes should be $\lambda^{-1/2}$-neighborhoods
 of the projection onto $(t,x)$ space of integral curves of the Hamilton flow of
 $$p(x,t, \tau,\xi)=\tau + Q(x,\xi),$$
 where $Q(x,\xi)$ is the principal symbol of the $-\Delta_g$, which in local coordinates
 is given by
 $$Q(x,\xi)=\sum_{j,k=1}^n g^{jk}(x)\xi_j\xi_k.$$
 Here $g^{jk}(x)$ denotes the cometric.  
 
 To be more specific, one might expect to control high-frequency solutions
 of Schr\"odinger equations by ``Kakeya-Nikodym norms'' over shrinking tubes about curves which in local coordinates
 are of the form
 $\gamma(t)=(t_0+t, x_0+x(t))$  where $x(t)$ is a geodesic with $\dot x(0)=\frac{\partial Q}{\partial \xi}(x_0,\xi_0)$ and
 speed $2Q(x_0,t_0)$, with $(t_0,x_0)\in \R\times M$.
 
 This approach proved to be successful even for ``critical norms'' in the related case of 
 estimates for eigenfunctions and in the aforementioned works improved eigenfunction estimates 
 versus the universal bounds \cite{sogge88} of one of us were obtained for manifolds of nonpositive curvature.  It would
 be very interesting to prove a corresponding result for high-frequency solutions of the unperturbed Schr\"odinger equation.

\newsection{Appendix: Littlewood-Paley and multiplier bounds involving $L^{n/2}$-potentials}\label{appendix}

Consider a nonnegative self-adjoint operator $H_V=-\Delta_g+V$ on a compact manifold.  Consider also
a Mikhlin-type multiplier $m\in C^\infty(\R_+)$, meaning that
\begin{equation}\label{h.1}
|\partial_\tau^j m(\tau)|\le C(1+\tau)^{-j}, \tau >0, \, \, \,
0\le j\le n/2 +1.
\end{equation}

We shall also assume that we have finite propagation speed for the wave equation associated to $H_V$.  By this
we mean that if $u,v\in L^2(M)$ and $d_g(\text{supp }u,\text{supp }v)=R$ then
\begin{equation}\label{h.2}
\bigl(u, \, \cos t\sqrt{H_V} \, v\bigr)=0, \, \, |t|<R.
\end{equation}

Additionally for a given $2<q_0<\infty$ we shall assume
that one has the  Bernstein (dyadic Sobolev)
estimates
\begin{multline}\label{h.3}
\|\beta(\sqrt{H_V}/\la) u\|_{L^{q_0}(M)}
\le C\la^{n(\frac12-\frac1{q_0})}\|u\|_{L^2(M)}, \, \,
\la \ge 1,
\\
\text{and } \, \,
\|\beta_0(\sqrt{H_V})u\|_{L^{q_0}(M)}\le C\|u\|_{L^2(M)},
\, \, \text{if } \, \beta_0(s)=1-\sum_{k=1}^\infty
\beta(2^{-k}s).
\end{multline}
Note that we would automatically have these bounds if
we had the natural  heat estimates of
Li and Yau~\cite{LiYau} for small times (see \cite{BSS}).

Using a result of Blunck \cite[Theorem 1.1]{Blunck}
we claim that we can obtain the following.  

\begin{theorem}\label{mult}
Assume that \eqref{h.2} and \eqref{h.3} are valid and
that $m$ is as in \eqref{h.1}.  Then
\begin{equation}\label{multineq}
\bigl\|m(\sqrt{H_V})f\bigr\|_{L^q(M)}
\le C_q\|f\|_{L^q(M)} \quad \forall \, q\in 
(q_0',q_0).
\end{equation}
\end{theorem}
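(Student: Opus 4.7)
The plan is to deduce Theorem~\ref{mult} directly from the spectral multiplier theorem of Blunck~\cite{Blunck}. That theorem asserts $L^q$ boundedness of $m(\sqrt{H})$ for multipliers $m$ satisfying a Mikhlin condition, provided two hypotheses hold: (a) the associated half-wave propagator $\cos(t\sqrt{H})$ satisfies finite propagation speed, and (b) one has a dyadic $L^2\to L^{q_0}$ Bernstein/Plancherel estimate for spectral cutoffs $\beta(\sqrt{H}/\lambda)$ with the sharp exponent $\lambda^{n(1/2-1/q_0)}$. Under these hypotheses the conclusion is $L^q$-boundedness for $q$ in the symmetric range $(q_0',q_0)$. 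The proof will consist of checking that our assumptions match this template and then invoking the theorem as a black box.

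First, I would observe that hypothesis (a) is precisely \eqref{h.2}, and the high-frequency half of hypothesis (b) is the first inequality of \eqref{h.3}. The second inequality in \eqref{h.3}, controlling the low-frequency piece $\beta_0(\sqrt{H_V})$, will be used to absorb the contribution of the spectral window near the origin, which is not annihilated by a generic Mikhlin-type $m$ (since such $m$ need not vanish at zero); this is why both pieces of \eqref{h.3} are required, not just the dyadic one.

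Next I would match the smoothness hypothesis on $m$. Blunck's theorem for a space of homogeneous dimension $n$ demands control of $m$ up to order $\lfloor n/2\rfloor+1$ derivatives in the Mikhlin sense, which is exactly the assumption we record in \eqref{h.1}. By self-adjointness of $H_V$, the $L^2 \to L^{q_0}$ bound in \eqref{h.3} is equivalent by duality to an $L^{q_0'}\to L^2$ bound, and Blunck's argument glues these endpoints together via interpolation with the trivial $L^2\to L^2$ estimate to produce the symmetric range $q\in(q_0',q_0)$ in \eqref{multineq}.

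The main technical obstacle is already handled inside \cite{Blunck}: one uses the finite propagation speed \eqref{h.2} together with Fourier inversion to write regularizations of $m(\sqrt{H_V})$ as superpositions of $\cos(t\sqrt{H_V})$, obtaining kernels that are spatially localized at the scale $\lambda^{-1}$ dual to the spectral scale $\lambda$; the Bernstein estimate \eqref{h.3} then supplies the off-diagonal decay required to carry out a Calder\'on--Zygmund style weak-type $(1,1)$ decomposition on the dyadic pieces $\beta(\sqrt{H_V}/2^k)m(\sqrt{H_V})$. Since this machinery is precisely what \cite{Blunck} is designed to deliver, the proof of Theorem~\ref{mult} reduces to verifying the three inputs above and quoting \cite[Theorem 1.1]{Blunck}.
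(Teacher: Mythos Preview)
There is a genuine gap: you have misidentified the actual hypothesis of Blunck's Theorem~1.1. That theorem does \emph{not} take finite propagation speed and a Bernstein inequality as its inputs. Its hypothesis is a \emph{generalized Gaussian estimate} for the heat semigroup, namely a bound of the form
\[
\bigl\| \, \1_{B(x_0,r)} \, e^{-\frac{r^2}{2}H_V} \, \1_{B(y_0,r)}\, \bigr\|_{L^{q'}(M)\to L^q(M)}
\lesssim r^{-n(\frac1{q'}-\frac1q)}\, e^{-\varepsilon_q \, d_g(x_0,y_0)/r},
\]
for $q\in(2,q_0)$. Neither \eqref{h.2} nor \eqref{h.3} says anything about the heat operator, so quoting Blunck as a black box on those inputs is not legitimate; the heat-kernel estimate above must be \emph{derived} from \eqref{h.2} and \eqref{h.3}, and that derivation is the entire content of the paper's proof.

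Concretely, the paper proceeds in two steps and then interpolates. First, it uses finite propagation speed \eqref{h.2}: writing $e^{-\frac{r^2}{2}H_V}$ via the subordination formula as a superposition of $\cos t\sqrt{H_V}$, the contributions from $|t|<d_g(x_0,y_0)/2$ vanish when testing against functions supported in the two balls, and the remaining Gaussian tail gives the off-diagonal $L^2\to L^2$ bound $\lesssim e^{-c\, d_g(x_0,y_0)/r}$. Second, it uses the Bernstein estimates \eqref{h.3}: decomposing $e^{-\frac{r^2}{4}H_V}$ dyadically in frequency and summing yields $\|e^{-\frac{r^2}{2}H_V}\|_{L^{q_0'}\to L^{q_0}}\lesssim r^{-n(1/q_0'-1/q_0)}$ via $TT^*$. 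Riesz interpolation between these two endpoints produces the generalized Gaussian estimate for all $q\in(2,q_0)$, at which point Blunck applies. Your description of ``Blunck's internal machinery'' (finite propagation plus Fourier inversion for $m(\sqrt{H_V})$) is therefore misplaced: the cosine-propagator representation is used \emph{outside} Blunck's theorem, to verify its heat-kernel hypothesis, not inside it.
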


If the standard small time pointwise heat kernel bounds  held, then
the the results in Theorem~\ref{mult} hold for all $1<q<\infty$  by Alexopoulos~\cite{AlexSP}.
However, as was shown in \cite{BSS}, following \cite{AZ} and  \cite{Simonsurvey}, the standard
small time heat kernel estimates need not hold if $V\in L^{n/2}(M)$, since there
can be unbounded eigenfunctions.  On the other hand, assuming that $V\in {\cal K}(M)$
ensures that these estimates hold by Sturm~\cite{Sturm}.
Here, ${\mathcal K}(M)$ denotes the Kato class.  Recall also that if $V\in L^{n/2+\delta}(M)$, $\delta>0$, then $V\in {\mathcal K}(M)$, and
that $L^{n/2}(M)$ and ${\mathcal K}(M)$ enjoy the same scaling properties.

By the results in \cite{BHSS}, \eqref{h.3} holds for
$V\in L^{n/2}(M)$ if $2<q_0<\infty$ if $n=3,4$, and
if $q_0=2n/(n-4)$ if $n\ge 5$.  By results in 
\cite{BSS}, if $n=2$ one also has this bound for
all $2<q_0<\infty$ if $V\in {\mathcal K}(M)$.  One obtains these dyadic Sobolev
estimates in higher dimensions $n\ge5$ directly from Sobolev estimates proved in
\cite{BHSS} and for $n=2,3,4$ by a simple orthogonality argument and the quasimode estimates
proved in \cite{BHSS} and \cite{BSS} in the  other cases.
By a result of Coulhon and Sikora \cite{CouS},
\eqref{h.2} is valid when $H_V$ is nonnegative,
self-adjoint and $V\in L^1(M)$.  Alternately, one can use arguments from \cite{BSS} to show this for the potentials that
we are considering.

Consequently, the estimates \eqref{multineq} are valid for the potentials we are considering
provided that $1<q<\infty$ if $n=2,3,4$ and for $\tfrac{2n}{n+4}<q<\tfrac{2n}{n-4}$ if $n\ge 5$.\footnote{We should point out
that these results also are a consequence of estimates in \cite{SikoraMultipliers}, and, in fact, a stronger theorem involving
weaker regularity assumption on the multiplier $m$ also holds.  On the other hand, since the proof of Theorem~\ref{mult} is simple, and since it easy
to use the main Theorem in \cite{Blunck} to see the ingredients that are needed, we have chosen to include the proof here for the sake of completeness.
We also do this since checking that the  hypotheses for the very general results  in \cite{SikoraMultipliers} which are needed to obtain Theorem~\ref{mult}  is a bit laborious.}
 Therefore, by a standard argument involving Radamacher functions (see e.g., \cite[p. 21]{SFIO2})
we obtain the Littlewood-Paley estimates \eqref{d.31} that we used at the end of \S~\ref{stsec}:

\begin{corr}\label{LPcorr}  If $V\in L^{n/2}(M)$
and $1<q<\infty$ for $n=3,4$ or $2n/(n+4)<q<2n/(n-4)$
for $n\ge 5$ then whenever \eqref{h.1} is valid we
have $m(\sqrt{H_V}): \, L^q(M)\to L^q(M)$.  If 
$n=2$, $V\in {\mathcal K}(M)$ and $1<q<\infty$ then
these bounds also hold.  Consequently, under
these hypotheses we have the Littlewood-Paley estimates
\begin{equation}\label{lpest}
 \|h\|_{L^q(M)} \le C_{q,V}\, 
\|\beta_0(\sqrt{H_V})h\|_{L^q(M)}
+
\bigl\| \, \bigl(\, \sum_{k=1}^\infty
\, \bigl|\beta(\sqrt{H_V}/2^k)h\bigr|^2\, \bigr)^{1/2} \,
\bigr\|_{L^q(M)},
\end{equation}
for $q$ and $V$ as above.
\end{corr}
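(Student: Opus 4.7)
\smallskip

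The plan is to deduce Corollary~\ref{LPcorr} from Theorem~\ref{mult} in three movements: (i) verify the abstract hypotheses \eqref{h.2} and \eqref{h.3} for the concrete potentials under consideration, with the correct exponent $q_0$ in each dimensional regime; (ii) invoke Theorem~\ref{mult} to conclude that every Mikhlin-type multiplier $m(\sqrt{H_V})$ is bounded on $L^q(M)$ for $q\in(q_0',q_0)$; (iii) promote this to the square-function estimate \eqref{lpest} via the classical Rademacher-function/vector-valued multiplier argument.

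For step (i), finite propagation speed \eqref{h.2} follows immediately from Coulhon--Sikora \cite{CouS}, since $L^{n/2}(M)\subset L^1(M)$ for $n\ge 2$ and $\mathcal K(M)\subset L^1(M)$, so in all cases $H_V$ is a nonnegative self-adjoint operator built from an $L^1$ potential; alternatively one can appeal to the variant of this result carried out in \cite{BSS}. For the dyadic Bernstein-type bounds \eqref{h.3} the exponent $q_0$ is chosen dimensionally: when $n\ge 5$ I take $q_0=2n/(n-4)$, at which value the uniform Sobolev inequality for $H_V$ established in \cite{BHSS} directly gives the first line of \eqref{h.3} (combined with the spectral theorem on the low-frequency piece for the second line). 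When $n=3,4$ the quasimode bounds of \cite{BHSS} yield, for each $2<q_0<\infty$, an estimate of the form $\|\beta(\sqrt{H_V}/\lambda)u\|_{L^{q_0}}\lesssim \lambda^{n(1/2-1/q_0)}\|u\|_{L^2}$ by a standard orthogonality argument (decomposing $\beta(\sqrt{H_V}/\lambda)u$ into its spectral components inside the band and applying the corresponding quasimode bound). When $n=2$ the same argument runs using the two-dimensional quasimode estimates of \cite{BSS}, whose hypotheses are exactly $V\in\mathcal K(M)$.

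With \eqref{h.2} and \eqref{h.3} verified, step (ii) is immediate: Theorem~\ref{mult} applied to any $m$ satisfying \eqref{h.1} produces
\[
\bigl\|m(\sqrt{H_V})f\bigr\|_{L^q(M)}\le C_q\|f\|_{L^q(M)},\qquad q\in(q_0',q_0),
\]
and with the above choices of $q_0$ the interval $(q_0',q_0)$ is exactly $1<q<\infty$ when $n=2,3,4$, and $2n/(n+4)<q<2n/(n-4)$ when $n\ge 5$, as claimed.

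For step (iii), which is the route to \eqref{lpest}, I would run the standard randomization argument, exactly as in \cite[p.\,21]{SFIO2}. Let $\{r_k(\omega)\}$ be the Rademacher functions on $[0,1]$ and, for a fixed sequence of signs $\omega$, consider the multiplier
\[
m_\omega(\tau)=\beta_0(\tau)+\sum_{k=1}^\infty r_k(\omega)\,\beta(\tau/2^k).
\]
The support separation built into \eqref{i.7}--\eqref{i.8} makes each $m_\omega$ satisfy \eqref{h.1} uniformly in $\omega$, so step (ii) yields $\|m_\omega(\sqrt{H_V})h\|_{L^q(M)}\lesssim\|h\|_{L^q(M)}$ uniformly in $\omega$. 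Integrating in $\omega$ and applying Khintchine's inequality pointwise in $x$ converts this into
\[
\Bigl\|\bigl(|\beta_0(\sqrt{H_V})h|^2+\sum_{k=1}^\infty|\beta(\sqrt{H_V}/2^k)h|^2\bigr)^{1/2}\Bigr\|_{L^q(M)}\lesssim \|h\|_{L^q(M)},
\]
which is the $\lesssim$ direction. The converse direction, which is what actually appears in \eqref{lpest}, follows by duality: the same argument applied with exponent $q'\in(q_0',q_0)$ gives the reverse square-function bound, and pairing against a test function combined with $\sum_k\beta(\tau/2^k)+\beta_0(\tau)\equiv 1$ on the spectrum of $\sqrt{H_V}$ produces \eqref{lpest}.

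The only real obstacle is step (i) in the low-dimensional case $n=3,4$: one must be sure that the quasimode bounds of \cite{BHSS} really are available for \emph{every} $2<q_0<\infty$, so that the interval $(q_0',q_0)$ can be opened up to the full range $1<q<\infty$. Once this dimensional bookkeeping is done, steps (ii) and (iii) are essentially automatic.
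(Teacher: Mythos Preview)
Your proposal is correct and follows essentially the same approach as the paper: verify the hypotheses \eqref{h.2} (via Coulhon--Sikora, since the potentials are in $L^1$) and \eqref{h.3} (via the Sobolev/quasimode estimates of \cite{BHSS} and \cite{BSS}, with the dimensional bookkeeping on $q_0$ you describe), invoke Theorem~\ref{mult}, and then run the standard Rademacher/Khintchine argument as in \cite[p.~21]{SFIO2}. One minor quibble: the dyadic pieces $\beta(\cdot/2^k)$ do \emph{not} have disjoint supports under \eqref{i.7}--\eqref{i.8}, they overlap with their neighbors; what makes $m_\omega$ a uniform Mikhlin multiplier is the bounded overlap, not support separation, but this does not affect the argument.
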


\begin{proof}[Proof of Theorem~\ref{mult}]
By Theorem 1.1 in Blunck~\cite{Blunck}, it suffices
to show that, if $q\in (2,q_0)$, we have for some $\e_q>0$
\begin{equation}\label{bl}
\Bigl\| \, \1_{B(x_0,r)} \, e^{-\frac{r^2}2H_V} \, \1_{B(y_0,r)}\,
\Bigr\|_{L^{q'}(M) \to L^q(M)}\lesssim
r^{-n(\frac1{q'}-\frac1q)} e^{-\e_q \frac{d_g(x_0,y_0)}r}.
\end{equation}  
Here $\1_{B(x_0,r)}$ is the operator which is multiplication by the indicator function of 
the geodesic ball $B(x_0,r)$ of radius $r$ centered at 
$x_0$.  We may assume that $r$ is small, say,
smaller than half the injectivity radius, since otherwise
the estimate is trivial due to \eqref{h.3} and a simple $TT^*$ argument.

Let us first use \eqref{h.2} to deduce that
\begin{equation}\label{bl2}
\Bigl\| \, \1_{B(x_0,r)} \, e^{-\frac{r^2}2H_V} \, \1_{B(y_0,r)}\,
\Bigr\|_{L^{2}(M) \to L^2(M)}\lesssim
e^{-cd_g(x_0,y_0)/r},
\end{equation}
for some $c>0$.  We should note that, like \eqref{h.3},
\eqref{bl2}
automatically holds when one has the standard small-time heat kernel estimates.

We may assume that $d_g(x_0,y_0)\ge 10 r$, since otherwise
the result is trivial.  In this case, choose
$\rho\in C^\infty_0((-1/2,1/2))$ with $\rho(s)=1$ near
the origin.  Then since
$$e^{-\frac{r^2}2 H_V}
=\frac1{\sqrt{2\pi}}\int 
\frac1r e^{-\frac12 (t/r)^2}\,    \cos t\sqrt{H_V}\, dt,
$$
and by \eqref{h.2}
$$\1_{B(x_0,r)}\, \cos t\sqrt{H_V} \, 
\1_{B(y_0,r)} =0 \quad \text{if } \, \, t<R_0=d_g(x_0,y_0)/2,
$$
we must have
\begin{multline*}\1_{B(x_0,r)} \, e^{-\frac{r^2}2 H_V}\,  \1_{B(y_0,r)}
\\
=  \1_{B(x_0,r)} \,
\Bigl(\, \frac1{\sqrt{2\pi}}\int
(1-\rho(t/R_0))\, 
\frac1r e^{-\frac12 (t/r)^2}\, \,  \cos t\sqrt{H_V} \, dt\, \Bigr)
\, \1_{B(y_0,r)}.\end{multline*}
Consequently,
$$\int
|(1-\rho(t/R_0))|\, 
\frac1r e^{-\frac12 (t/r)^2}\, dt \lesssim
e^{-c d_g(x_0,y_0)/r}, \quad R_0=d_g(x_0,y_0)/2,
$$
and so, by the spectral theorem, 
\begin{multline*}
\Bigl\| \, \1_{B(x_0,r)} \, e^{-\frac{r^2}2H_V} \, \1_{B(y_0,r)}\,
\Bigr\|_{L^{2}(M) \to L^2(M)}
\\
\lesssim
\Bigl\|
\frac1{\sqrt{2\pi}}\int (1-\rho(t/R_0))\, 
\frac1r e^{-\frac12 (t/r)^2}\,    \cos t\sqrt{H_V} \, dt \,
\Bigr\|_{L^2(M)\to L^2(M)}
\lesssim e^{-cd_g(x_0,y_0)/r},
\end{multline*}
as claimed.

Next, by \eqref{h.3}
\begin{align*}
\bigl\| e^{-\frac{r^2}4H_V}f\bigr\|_{L^{q_0}(M)}
&\le \|\beta_0(\sqrt{H_V}) e^{-\frac{r^2}4H_V}f\|_{L^{q_0}(M)}
+\sum_{k=1}^\infty \bigl\|\beta(\sqrt{H_V}/2^k) e^{-\frac{r^2}4H_V}f \bigr\|_{L^{q_0}(M)}
\\
&\lesssim
\|f\|_{L^2(M)}+\sum_{k=1}^\infty
2^{nk(\frac12-\frac1{q_0})} \bigl\|\beta(\sqrt{H_V}/2^k) e^{-\frac{r^2}4H_V}f \bigr\|_{L^2(M)}
\\
&\lesssim 
\Bigl(\, 1\, +\, \sum_{k=1}^\infty 2^{nk(\frac12-\frac1{q_0})}
e^{-\frac14 (r2^k)^2}\, \Bigr) \, \|f\|_{L^2(M)}
\\
&\lesssim r^{-n(\frac12-\frac1{q_0})}\|f\|_{L^2(M)}.
\end{align*}

By a $TT^*$ argument this yields
\begin{equation}\label{bl3}
\bigl\| e^{-\frac{r^2}2H_V}\bigr\|_{L^{q_0'}(M)
\to L^{q_0}(M)}\lesssim r^{-n(\frac1{q_0'}-\frac1{q_0})}.
\end{equation}
By the M. Riesz interpolation theorem, \eqref{bl2}
and \eqref{bl3} yield \eqref{bl} for all
$q\in (2,q_0)$, as desired.
\end{proof}

\bibliography{refs}
\bibliographystyle{abbrv}
\end{document}